\def\A{{\mathcal{A}}}
\def\R{{\cal{R}}}
\def\bmatrix{\left[\begin{array}}
\def\endbmatrix{\end{array}\right]}
\def\d{\begin{definition}}
\def\ed{\end{definition}}
\def\t{\begin{theorem}}
\def\et{\end{theorem}}
\def\l{\begin{lemma}}
\def\el{\end{lemma}}
\def\c{\begin{corollary}}
\def\ec{\end{corollary}}
\newtheorem{theorem}{Theorem}[section]
\newtheorem{lemma}{Lemma}[section]
\newtheorem{corollary}{Corollary}[section]
\newtheorem{definition}{Definition}[section]
\newtheorem{prop}{Proposition}[section]
\newtheorem{thm}[prop]{Theorem}
\newtheorem{rema}[prop] {Remark}
\newtheorem{cor}[prop] {Corollary}
\def\sfstp{{\hskip-1em}{\bf.}{\hskip.5em}}
\begin{document}

\pagestyle{myheadings} \markboth{   }
{ \hskip5truecm BOASSO ET AL.}
\title{\bf ON WEIGHTED REVERSE ORDER LAWS FOR THE MOORE-PENROSE INVERSE AND  $K$-$\hbox{\rm \bf INVERSES}$}
  \author{ Enrico Boasso, Dragana S. Cvetkovi\'c-$\hbox{\rm Ili\' c}$ and Robin $\rm Harte$\\ \normalsize}
\date{   }
\maketitle{}
%\subject{}
\setlength{\baselineskip}{14pt}
\begin{abstract} The main objective of this article is
to study several generalizations of the reverse order law
for the Moore-Penrose inverse in ring with involution. \par
\vskip.2truecm
\noindent  \it Keywords: \rm Moore-Penrose inverse, $K$-inverse, reverse order law, ring with involution, prime ring.\par
\vskip.2truecm
\noindent \it 2000 Mathematics Subject Classification: 15A09.

\end{abstract}

\section {\sfstp Introduction}
\
\indent Given a complex matrix $a$, the Moore-Penrose inverse of $a$ is the unique complex matrix
$b$ satisfying the following {\it Penrose equations} (Penrose (1955)):\par
$$
(1) \hskip.1truecm a=aba, \hskip1truecm (2)\hskip.1truecm b=bab, \hskip1truecm (3)\hskip.1truecm (ab)^*=ab, \hskip1truecm (4) \hskip.1truecm (ba)^*=ba.
$$
This generalization of  the inverse of a non-singular square matrix
was first introduced by E. H. Moore, but remained unknown mainly because of Moore's special notation
(see Moore (1920)). The equations (1)-(4) were formulated by Penrose, and they
characterize the same object  considered by E. H. Moore.\par
\indent T. N. Greville first characterized when the product of two complex matrices $a$ and $b$
satisfies the so-called {\it reverse order law} for the Moore-Penrose inverse, that is when
$$
(ab)^{\dag}=b^{\dag}a^{\dag},
$$
where $c^{\dag}$ denotes the Moore-Penrose of a complex matrix $c$ (Greville (1966)); note that  the proofs in Greville (1966) remain valid for pairs $a,b$ of Moore-Penrose invertible $C^*$-algebra elements whose product $ab$ also has a Moore-Penrose inverse. In this context, in Boasso (2006) several other conditions equivalent to the reverse order law for the Moore-Penrose inverse
were proved.\par

\indent In the framework of rings with involution, J. J. Koliha, D. S. Djordjevi\'c and D. S. Cvetkovi\'c  extended the characterization in  Greville (1966) under the additional assumption of the $*$-left cancellation property of a particular element of the ring (Koliha, Djordjevi\'c, Cvetkovi\'c (2007)).\par

\indent The first objective of the present article is to study the following generalization of the reverse order law for the Moore-Penrose inverse: given a ring with involution $\mathcal{R}$, elements in $\mathcal{R}$ for which $a$, $b$ and $ab$ are Moore-Penrose invertible, and an element $c\in \mathcal{R}$ which commutes with $b$ and $b^*$, characterize when the following identity holds:
$$
(ab)^{\dag}=c b^{\dag}a^{\dag}.
$$
This identity and others presented in section 2 will be called \it weighted reverse order laws \rm for the Moore-Penrose inverse.\par

Naturally,
when $c=e$ a characterization of the usual reverse order law is obtained. Furthermore,
other similar generalizations of  the reverse order law for the Moore-Penrose
inverse in rings with involution and in complex algebras with involution will be also considered,
see next section. Note that no additional assumption such as the $*$-cancellation
property for elements of the ring is needed.\par
 
\indent On the other hand, given a $C^*$-algebra $\mathcal{A }$, an element $a\in \mathcal{A }$ and a subset $K\subseteq \{1,2,3,4\}$,
an element $x\in  \mathcal{A }$ is said to be a \it $K$-inverse of $a$\rm, if $x$ satisfies the Penrose equation $(j)$ for each $j\in K$. Several
reverse order laws for $K$-inverses of products of two $C^*$-algebra elements were characterized by D. S. Cvetkovi\' c-Ili\' c and R. E. Harte.
The second objective of this work is to extend some of the results in Cvetkovi\' c-Ili\' c, Harte (2011)
to weighted reverse order laws in rings with involution, see section 3.\par
\vskip.2truecm
\indent  Before going on,  several definitions and some notation will be recalled.\par

\indent Let $\mathcal{R}$ be an associative ring with unit element $e$.
The ring $\mathcal{R}$ is said to be a \it prime ring\rm, if whenever elements $a$ and $b\in\R$ satisfy $a\mathcal{R }b=\{0\}$, then $0\in\{a,b\}$
(see McCoy (1949)). For example, given $n\in \mathbb N$, the ring of square matrices $\mathbb C^{n\times n}$ is prime, see Lemma 3 in Baksalary, Baksalary (2005). 
It is not difficult to prove that the same is true when $\mathcal{A}\subseteq L(X)$ is a subalgebra of the Banach algebra of all bounded operators defined on the Banach space $X$ which contains the ideal
of finite rank operators. 
In the case of general Banach algebras, prime, ultraprime and spectrally prime algebras were considered in 
Harte, Hern\'andez (1998).\par

\indent An element $a\in\mathcal{R}$ is said to be \it group invertible \rm if there exists $b\in\mathcal{R}$
such that
$$
aba=a,\hskip1truecm bab=b,\hskip1truecm ab=ba.
$$
It is well known that if $a\in \mathcal{R}$ is group invertible, then there is only one group inverse of $a$ (Mosi\' c, Djordjevi\' c (2009)), which will be denoted by $a^{\sharp}$.\par
\indent An involution $^*\colon \mathcal{R}\to \mathcal{R}$ is an anti-isomorphism of degree $2$, that is
$$
(a^*)^*=a,\hskip1truecm (a+b)^*=a^*+b^*,\hskip1truecm (ab)^*=b^*a^*.
$$

\indent Given $\mathcal{R}$ a ring with involution, an element $a\in \mathcal{R}$ is said to be \it Hermitian \rm
if $a=a^*$, and $a$ is said to be \it Moore-Penrose invertible \rm if there exists $b\in \mathcal{R}$ such that $a$ and $b$ satisfy the Penrose
equations presented above.

\indent It is well known that given $a\in  \mathcal{R}$, there is at most one Moore-Penrose inverse of $a$, see  Roch, Silbermann (1999). When the Moore-Penrose inverse of $a\in  \mathcal{R}$ exists,  it will be denoted, as before, by $a^{\dag}$. In addition, $\mathcal{R}^{\dag}$ will stand for the set of all Moore-Penrose invertible elements of $a\in\mathcal{R}$.
Note that if $a\in \mathcal{R}^{\dag}$, then $aa^{\dag}$  and $a^{\dag}a$ are hermitian idempotents.
What is more,  if $a\in \mathcal{R}^{\dag}$, then $a^{\dag}\in \mathcal{R}^{\dag}$ and $(a^{\dag})^{\dag}=a$.
Moreover, it is easy to prove that  $a\in \mathcal{R}^{\dag}$ if and only if  $a^*\in \mathcal{R}^{\dag}$. Furthermore,
in this case, $(a^*)^{\dag}=(a^{\dag})^*$. In what follows $(a^{\dag})^*$  will be denoted by $a^{\dag*}$.
 \par

\indent Given $a\in \mathcal{R}$ and $K\subseteq \{1,2,3,4\}$, $x\in\mathcal{R}$  will be said to be a
\it $K$-inverse of $a$\rm, if $x$ satisfies the same condition recalled above for $C^*$-algebra elements.
The set of all $K$-inverses of a given $a\in \mathcal{R}$ will be denoted by $aK$.\par

\indent Finally, if $p$ and $q$ are idempotents in $\mathcal{R}$, then an arbitrary $x\in \mathcal{R}$
can be represented as a $2\times 2$ matrix over $\mathcal{R}$; specifically
 $$
x=\bmatrix{cc} x_1&x_2\\x_3&x_4\endbmatrix_{p,q},
$$
where $x_1=pxq$, $x_2=px(e-q)$, $x_3= (e-p)xq$ and $x_4=(e-p)x(e-q)$.
Note that $x=x_1+x_2+x_3+x_4$.\par

\section {\sfstp Weighted reverse order laws for the Moore-Penrose inverse}
\
\indent We begin by presenting an equivalent formulation for the Moore-Penrose inverse. Although its proof is not difficult (Penrose (1955)),
it will be used below, and hence we reproduce it here. \par
\markright{\hskip4truecm WEIGHTED REVERSE ORDER LAWS }
\begin{prop}\label{prop1}Let  $\mathcal{R}$ be a ring with involution
and consider $a\in   \mathcal{R}$. Then, the following statements are equivalent:\par
\noindent (i) $b\in   \mathcal{R}$ is the Moore-Penrose inverse of $a$;\par
\noindent (ii) $a= aa^*b^*$ and $b^*=abb^*$.
\end{prop}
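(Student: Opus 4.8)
The plan is to verify the two implications by direct manipulation of the four Penrose equations, using nothing beyond associativity and the fact that $^*$ is an involution (so $(xyz)^*=z^*y^*x^*$); no prime-ring or cancellation hypothesis enters.

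For (i) $\Rightarrow$ (ii), I would start from $b=a^{\dag}$, so that $aba=a$, $bab=b$, $(ab)^*=ab$ and $(ba)^*=ba$. Rewriting equations $(3)$ and $(4)$ as $ab=b^*a^*$ and $ba=a^*b^*$, the first identity of (ii) comes out by pure substitution: $aa^*b^*=a(a^*b^*)=a(ba)=aba=a$. For the second, I would apply the involution to equation $(2)$ to get $b^*=(bab)^*=b^*a^*b^*$, and then replace $b^*a^*$ by $ab$ using $(3)$, obtaining $b^*=(b^*a^*)b^*=abb^*$. This yields both equations in (ii).

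For (ii) $\Rightarrow$ (i), the key preliminary move is to apply $^*$ to the two hypotheses $a=aa^*b^*$ and $b^*=abb^*$, which gives $a^*=baa^*$ and $b=bb^*a^*$. Then I would left-multiply $a=aa^*b^*$ by $b$ and use $baa^*=a^*$ to get $ba=(baa^*)b^*=a^*b^*$; since $(ba)^*=a^*b^*$ always, this is precisely Penrose equation $(4)$. Symmetrically, left-multiplying $b=bb^*a^*$ by $a$ and using $abb^*=b^*$ gives $ab=(abb^*)a^*=b^*a^*=(ab)^*$, which is Penrose equation $(3)$. The remaining two equations then drop out by substitution: $aba=a(ba)=aa^*b^*=a$ and $bab=b(ab)=bb^*a^*=b$. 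Hence $b$ satisfies all four Penrose equations and, by the uniqueness of the Moore-Penrose inverse recalled above, $b=a^{\dag}$.

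Every step is a one-line computation, so there is no genuine obstacle here; the only thing that must be handled carefully is the bookkeeping of the involution, and in the second implication the mild trick of applying $^*$ to the hypotheses \emph{before} multiplying, which is exactly what makes the substitutions $baa^*=a^*$ and $abb^*=b^*$ available.
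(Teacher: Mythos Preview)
Your proof is correct and follows essentially the same route as the paper's: both directions are handled by the same substitutions, with the only cosmetic difference that in (ii) $\Rightarrow$ (i) the paper packages the computation as $ba=ba(ba)^*$ and $(ab)^*=ab(ab)^*$ (showing $ab$, $ba$ are hermitian idempotents in one stroke) before reading off $aba=a$ and $bab=b$, whereas you first establish $(ba)^*=ba$ and $(ab)^*=ab$ and then derive the remaining two Penrose equations separately. The invocation of uniqueness at the end is harmless but unnecessary, since statement (i) is precisely that $b$ satisfies the four Penrose equations.
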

\begin{proof} If $b= a^{\dag}$ then, since $(ab)^*=ab$,
$$
a=aba= a(ba)=a(ba)^* =aa^*b^*,\hskip1truecm
b^*=(bab)^*= (ab)^*b^*=abb^*.
$$

\indent On the other hand, if statement (ii) holds,
then
$$
ba=baa^*b^*=ba(ba)^*, \hskip1truecm
(ab)^*= b^*a^*=abb^*a^*= ab(ab)^*,
$$
equivalently, $ab$ and $ba$ are hermitian idempotents.
However, according to statement (ii),
$$
a=a(ba)^*=aba,\hskip2truecm b=b(ab)^*=bab.
$$
\end{proof}

\indent The following proposition will extend to rings with involution a well
known result concerning the Moore-Penrose inverse of $\mathbb C^*$-algebra elements,
see Theorem 7 in Harte, Mbekhta (1992).\par

\begin{prop}\label{prop2}Let  $\mathcal{R}$ be a ring with involution
and consider $a\in \mathcal{R}^{\dag}$ and  $c\in   \mathcal{R}$. Necessary and sufficient condition for $c$ to commute with $a$
and $a^*$ is that $c$ commutes with $a^{\dag}$ and  $a^{\dag*}$.
\end{prop}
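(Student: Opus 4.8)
The plan is to reduce the ``if and only if'' to a single implication by exploiting two symmetries. The hypothesis that $c$ commutes with $a$ and $a^*$ is invariant under $c\mapsto c^*$, since taking adjoints turns $ca=ac$ and $ca^*=a^*c$ into $c^*a^*=a^*c^*$ and $c^*a=ac^*$; and, since $(a^{\dag})^{\dag}=a$ and $(a^{\dag})^*=a^{\dag*}$, the two sides of the statement are interchanged by replacing $a$ with $a^{\dag}$. Hence it is enough to prove one implication, say: \emph{if $ca=ac$ and $ca^*=a^*c$, then $ca^{\dag}=a^{\dag}c$}. Applied also to $c^*$ this gives $c^*a^{\dag}=a^{\dag}c^*$, i.e.\ $ca^{\dag*}=a^{\dag*}c$, hence the forward direction; applied to $a^{\dag}$ in place of $a$ it gives the converse.

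So assume $ca=ac$ and $ca^*=a^*c$, and write $p=aa^{\dag}$, $q=a^{\dag}a$ for the Hermitian idempotents attached to $a$. I would first isolate the following purely formal step: \emph{if, in addition, $c$ commutes with $p$ and with $q$}, then $c$ commutes with $a^{\dag}$, by the chain
\[
ca^{\dag}=cqa^{\dag}=qca^{\dag}=a^{\dag}(ac)a^{\dag}=a^{\dag}(ca)a^{\dag}=a^{\dag}cp=a^{\dag}pc=a^{\dag}c ,
\]
which uses $qa^{\dag}=a^{\dag}=a^{\dag}p$ together with $cq=qc$, $ac=ca$, $cp=pc$. So the whole problem comes down to showing that $c$ commutes with $p$ and with $q$.

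To do this I would use that $aa^*$ and $a^*a$ lie in $\mathcal{R}^{\dag}$ whenever $a$ does, with $(aa^*)^{\dag}=a^{\dag*}a^{\dag}$ and $(a^*a)^{\dag}=a^{\dag}a^{\dag*}$ (a direct verification of the Penrose equations, or an application of Proposition~\ref{prop1}), together with the identities $p=aa^{\dag}=(aa^*)(aa^*)^{\dag}$ and $q=a^{\dag}a=(a^*a)^{\dag}(a^*a)$. Since $c$ commutes with $a$ and $a^*$, it commutes with the Hermitian elements $h:=aa^*$ and $a^*a$. Now for a Hermitian $h\in\mathcal{R}^{\dag}$ the inverse $h^{\dag}$ is Hermitian and commutes with $h$, so by uniqueness $h^{\dag}$ is the group inverse $h^{\sharp}$; and $h^{\sharp}$ lies in the double commutant of $h$, i.e.\ anything commuting with $h$ commutes with $h^{\sharp}$. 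Applying this to $h=aa^*$ and to $h=a^*a$ gives that $c$ commutes with $(aa^*)^{\dag}$ and $(a^*a)^{\dag}$, hence with $p$ and $q$, and the chain above then finishes the proof.

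I expect the one non-formal point to be the double-commutant property of the group inverse of a Hermitian element; this is standard (Drazin) and can simply be cited, or else proved by a short commutator computation: put $u=ch^{\sharp}-h^{\sharp}c$ and, using $ch=hc$ and $h^{\sharp}=h(h^{\sharp})^{2}=(h^{\sharp})^{2}h$, deduce first that $huh^{\sharp}=0$ and $u=hh^{\sharp}u=u\,hh^{\sharp}$, then $hu=0$, then $u=0$. Everything else is bookkeeping with the Penrose equations; conceptually it replaces the continuous functional calculus used in the $C^*$-algebra proof (Harte, Mbekhta (1992)) by the group-inverse structure available in any ring with involution.
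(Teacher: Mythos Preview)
Your proof is correct and rests on the same key ingredient as the paper's: the double--commutant property of the group inverse applied to the Hermitian element $a^*a$ (or $aa^*$). The paper's argument is slightly more economical: instead of first showing that $c$ commutes with the projections $p=aa^{\dag}$ and $q=a^{\dag}a$ and then running your chain $ca^{\dag}=cqa^{\dag}=\cdots=a^{\dag}c$, it invokes the formula $a^{\dag}=(a^*a)^{\sharp}a^*$ (Koliha--Patr\'{\i}cio) directly, so that once $c$ commutes with $(a^*a)^{\sharp}$ and with $a^*$ the conclusion $ca^{\dag}=a^{\dag}c$ is immediate. Your detour through $p$ and $q$ is not wrong, just longer; your commutator computation for the double--commutant fact is fine (the paper simply cites it), and your use of the symmetries $c\mapsto c^*$ and $a\mapsto a^{\dag}$ to reduce to a single implication matches the paper's handling of the converse.
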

\begin{proof} Let $a\in \mathcal{R}^{\dag}$. Then, according to Theorem 5.3
in  Koliha, Patr\'{\i}cio (2002),
$(a^*a)^{\sharp}$ exists. Moreover,
$$
a^{\dag}=(a^*a)^{\sharp}a^*.
$$
\indent If $c$ commutes with $a$ and $a^*$, then $c$
commutes with $a^*a$. In addition, since $a^*a$ is group invertible,
$c$ commutes with $(a^*a)^{\sharp}$, see Mosi\' c, Djordjevi\' c (2009). Therefore $c$ commutes with  $a^{\dag}
=(a^*a)^{\sharp}a^*$.\par

\indent In addition, since $a^*\in  \mathcal{R}^{\dag}$
and $(a^*)^*=a$, according to what has been proved, $c$ commutes
with $(a^*)^{\dag}=a^{\dag*}$.\par

\indent On the other hand, if $c$ commutes with $a^{\dag}$ and  $a^{\dag*}$, then since  $(a^{\dag})^{\dag}=a$ and  $(a^{\dag})^{\dag*}=a^*$,  $c$ commutes with $a$ and $a^*$.
\end{proof}

\indent Let   $\mathcal{R}$ be a ring with involution and
consider  $a, b\in\mathcal{R}^{\dag}$. Define
$$p=bb^{\dag},\hskip.5truecm q=a^{\dag}a^{\dag*},\hskip.5truecm
r=bb^*,\hskip.5truecm s=a^{\dag}a.
$$
 Clearly $p$, $q$, $r$ and $s$ are hermitian elements.
Moreover, according to Proposition \ref{prop1},
$$a=as,\hskip.3truecm  a^{\dag*}=aq,\hskip.3truecm
b=rb^{\dag*},\hskip.3truecm b^{\dag*}=pb^{\dag*}.
$$
\noindent Note that $p$, $q$, $r$ and $s$ are blanket notations for this section.\par
\indent In the following theorems several weighted reverse order laws for
the Moore-Penrose inverse will be presented. Note that when
$c=e$, then a characterization of the usual reverse order law in rings
with involution is obtained.\par
\markright{\hskip5truecm BOASSO ET AL. }
\begin{thm}\label{thm3} Let $\mathcal{R}$ be a ring with involution.
Consider $a,b \in\mathcal{R}^{\dag}$ such that $ab\in \mathcal{R}^{\dag}$,
and $c\in \mathcal{R}$ such that $c$ commutes with $b$ and $b^*$.
Then, the following statements are equivalent:\par
\noindent (i) $(ab)^{\dag}=c b^{\dag}a^{\dag}$;\par
\noindent (ii) $a(cpq-qp)b^{\dag*}c^*=0$ and $a(rsc^*-sr)b^{\dag*}=0$;\par
\noindent (iii) $scpqpc^*=qpc^*$ and  $srspc^*=sr$.\par
\end{thm}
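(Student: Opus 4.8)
The plan is to prove the chain $(i)\Leftrightarrow(ii)\Leftrightarrow(iii)$ by unpacking the Moore--Penrose characterization from Proposition \ref{prop1} applied to the element $ab$, and then translating the resulting conditions into the blanket notation $p,q,r,s$. First I would observe that by Proposition \ref{prop1}, $x=(ab)^{\dag}$ is equivalent to the pair of identities $ab=(ab)(ab)^*x^*$ and $x^*=(ab)xx^*$. Substituting the candidate $x=cb^{\dag}a^{\dag}$ and using the involution rule $x^*=a^{\dag*}b^{\dag*}c^*$ together with the stated commutation of $c$ with $b$ and $b^*$ (hence, via Proposition \ref{prop2}, with $b^{\dag}$ and $b^{\dag*}$ as well), the first Penrose-type identity should reduce, after inserting $q=a^{\dag}a^{\dag*}$, $p=bb^{\dag}$ and the basic identities $a=as$, $a^{\dag*}=aq$, $b=rb^{\dag*}$, $b^{\dag*}=pb^{\dag*}$, to the first equation in (ii); the second identity should likewise reduce to the second equation in (ii). This gives $(i)\Leftrightarrow(ii)$.

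For $(ii)\Leftrightarrow(iii)$ I would multiply the equations in (ii) on the left by $a^{\dag}$ and on the right by factors built from $b$, $b^{\dag}$, exploiting that $a^{\dag}a\,q = s q$ collapses nicely because $q$ absorbs $s$ (since $a^{\dag*}=aq$ gives $a^{\dag}a^{\dag*}=a^{\dag}aq$, i.e. $q=sq$, and similarly $sq=q$), and that $p$ and $r$ interact through $pr=r$, $rp=r$ coming from $b=rb^{\dag*}=r p b^{\dag*}$ and hermiticity. The key identities I expect to need repeatedly are $sq=qs=q$, $pr=rp=r$ (or their transposes), $pq\cdot$something, and the fact that $a$ is left-invertible modulo $s$ and $b^{\dag*}$ is right-invertible modulo $p$ in the appropriate one-sided sense, so that cancelling the outer $a$ and $b^{\dag*}$ is legitimate. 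Conversely, from (iii) one reconstructs (ii) by multiplying back by $a$ on the left and $b^{\dag*}$ (and $c^*$) on the right and re-absorbing the idempotents.

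The main obstacle will be the bookkeeping in the reduction $(i)\Leftrightarrow(ii)$: one must carefully track where each hermitian idempotent can be inserted or deleted, and verify that the commutation hypothesis on $c$ is used exactly enough times to move every $c$ and $c^*$ past the relevant $b$-factors without needing any cancellation property on $a$. A secondary subtlety in $(ii)\Leftrightarrow(iii)$ is ensuring that the passage from (ii) to (iii) is genuinely reversible — i.e. that multiplying by $a$ and $b^{\dag*}$ and then simplifying returns exactly the expressions in (ii) and not something weaker — which should follow because $a = as$, $a^{\dag*} = aq$ on the left and $b^{\dag*} = pb^{\dag*}$, $b = rb^{\dag*}$ on the right let us recover the outer factors from the inner idempotent forms. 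No single step is deep; the content is entirely in organizing these substitutions cleanly.
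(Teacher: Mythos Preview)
Your plan is correct and follows essentially the same route as the paper: both arguments rest on Proposition~\ref{prop1} applied to $ab$ with candidate inverse $cb^{\dag}a^{\dag}$, the commutation of $c$ with $b,b^{\dag},b^{\dag*}$ via Proposition~\ref{prop2}, and the reductions $a=as$, $a^{\dag*}=aq$, $b=rb^{\dag*}$, $b^{\dag*}=pb^{\dag*}$. The only organizational difference is that the paper argues in a cycle $(i)\Rightarrow(ii)\Rightarrow(iii)\Rightarrow(i)$, proving $(iii)\Rightarrow(i)$ directly by left-multiplying by $a$ and right-multiplying by $b^{\dag*}$ to rebuild the two Proposition~\ref{prop1} identities, whereas you plan the biconditionals $(i)\Leftrightarrow(ii)\Leftrightarrow(iii)$; since the passage $(i)\Leftrightarrow(ii)$ is visibly reversible (the two equations in (ii) \emph{are} the two Proposition~\ref{prop1} conditions rewritten) and your reconstruction $(iii)\Rightarrow(ii)$ via multiplying back by $a$ and $b^{\dag*}$ works exactly as you anticipate, the two arrangements are interchangeable.
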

 \begin{proof} In first place, note that according to Proposition \ref{prop1},
\begin{align*}
&a=aa^*a^{\dag*},& &b=bb^*b^{\dag*},& &ab= ab(ab)^*(ab)^{\dag*},&\\
&a^{\dag*}=aa^{\dag}a^{\dag*},&
&b^{\dag*}=bb^{\dag}b^{\dag*},&
&(ab)^{\dag*}=ab(ab)^{\dag}(ab)^{\dag*}.&\\
\end{align*}

\indent (i) $\Rightarrow$ (ii). Suppose that $(ab)^{\dag}=c b^{\dag}a^{\dag}$.
Then, since $(ab)^*$ = $b^*a^*$ and
$(ab)^{\dag*}$ = $(cb^{\dag}a^{\dag})^*$ = $a^{\dag*}b^{\dag*}c^*$,
$$
ab= abb^*a^*a^{\dag*}b^{\dag*}c^*,\hskip1cm
a^{\dag*}b^{\dag*}c^*= abcb^{\dag}a^{\dag}a^{\dag*}b^{\dag*}c^*,
$$
which, since $c$ and $b$ commute, can be written as
$$
asrb^{\dag*}=arsb^{\dag*}c^*,\hskip1.5cm aqpb^{\dag*}c^*=
acpqb^{\dag*}c^*.
$$
\indent However, according to Proposition \ref{prop2} these identities are equivalent to
$$
a(cpq-qp)b^{\dag*}c^*=0, \hskip2cm a(rsc^*-sr)b^{\dag*}=0.
$$

\indent (ii) $\Rightarrow$ (iii). If the second statement holds,
then
$$
a^{\dag}acpqb^{\dag*}c^*b^*= a^{\dag}aqpb^{\dag*}c^*b^*,\hskip1cm
a^{\dag}arsc^*b^{\dag*}b^*= a^{\dag}asrb^{\dag*}b^*.
$$
\indent However, since $c$ commutes with $b$ and $b^{\dag}$,
$$
a^{\dag}acpqb^{\dag*}b^*c^*= a^{\dag}aqpb^{\dag*}b^*c^*,\hskip1cm
a^{\dag}arsb^{\dag*}b^*c^*= a^{\dag}asrb^{\dag*}b^*.
$$

\indent What is more, according again to Proposition \ref{prop1} and to the fact that $s=s^*$ and $p=p^*$, these equations can be rewritten as
$$
scpqpc^*=a^{\dag}(aa^{\dag}a^{\dag*})b(b^{\dag}b^{\dag*}b^*)c^*=a^{\dag}a^{\dag*}bb^{\dag}c^*=qpc^*,
$$
$$
srspc^*=(a^{\dag}aa^*)a^{\dag*}(bb^*b^{\dag*})b^*=a^*a^{\dag*}bb^*=sr.
$$
\indent (iii) $\Rightarrow$ (i). Suppose that statement (iii) holds. Then, since $p=p^*$, $s=s^*$ and $b$ and $c$ commute,
$$
 a^{\dag}abcb^{\dag}a^{\dag}a^{\dag*}b^{\dag*}b^*c^*=a^{\dag}a^{\dag*}bb^{\dag}c^*,
$$
$$
 a^{\dag}abb^*a^*a^{\dag*}b^{\dag*}b^*c^*=a^*a^{\dag*}bb^*.
$$
\indent Moreover, since  $c$ and $b^{\dag}$ commute,
$$
(aa^{\dag}a)bcb^{\dag}a^{\dag}a^{\dag*}(b^{\dag*}b^*b^{\dag*})c^*=(aa^{\dag}a^{\dag*})(bb^{\dag}b^{\dag*})c^*,
$$
$$
(aa^{\dag}a)bb^*a^*a^{\dag*}(b^{\dag*}b^*b^{\dag*})c^*=(aa^*a^{\dag*})(bb^*b^{\dag*}).
$$
\indent However, according to Proposition \ref{prop1}, these equations are
equivalent to
$$
 ab(cb^{\dag}a^{\dag})(cb^{\dag}a^{\dag})^* = (cb^{\dag}a^{\dag})^*,
$$
$$
 ab(ab)^*(cb^{\dag}a^{\dag})^*=ab.
$$
\indent Therefore,
$$
(ab)^{\dag}=c b^{\dag}a^{\dag}.
$$
\end{proof}

\indent As an application of Theorem \ref{thm3}, other generalizations of the reverse order law can be characterized.\par
\markright{\hskip4truecm WEIGHTED REVERSE ORDER LAWS }
\begin{thm}\label{thm4} Let $\mathcal{R}$ be a ring with involution.
Consider $a,b \in\mathcal{R}^{\dag}$ such that $ab\in \mathcal{R}^{\dag}$,
and $c\in \mathcal{R}$ such that $c$ commutes with $a$ and $a^*$.
Then, the following statements are equivalent:\par
\noindent (i) $(ab)^{\dag}= b^{\dag}a^{\dag}c$;\par
\noindent (ii) $b^*(c^*sr^{\dag}-r^{\dag}s)a^{\dag}c=0$ and $b^*(q^{\dag}pc-pq^{\dag})a^{\dag}=0$;\par
\noindent (iii) $pc^*sr^{\dag}sc=r^{\dag}sc$ and  $pq^{\dag}psc=pq^{\dag}$.\par
\end{thm}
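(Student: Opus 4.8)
The plan is to obtain Theorem \ref{thm4} from Theorem \ref{thm3} by passing to the involution. Applying $*$ to the identity $(ab)^{\dag}=b^{\dag}a^{\dag}c$ and using $(xy)^{\dag*}=(y^{*}x^{*})^{\dag}$ together with $x^{\dag*}=(x^{*})^{\dag}$, one sees that statement (i) of Theorem \ref{thm4} is equivalent to
$$
(b^{*}a^{*})^{\dag}=c^{*}(a^{*})^{\dag}(b^{*})^{\dag},
$$
which is precisely statement (i) of Theorem \ref{thm3} applied to the triple $(b^{*},a^{*},c^{*})$ in place of $(a,b,c)$. The hypotheses transfer without difficulty: $b^{*},a^{*}\in\mathcal{R}^{\dag}$ because $a,b\in\mathcal{R}^{\dag}$; $b^{*}a^{*}=(ab)^{*}\in\mathcal{R}^{\dag}$ because $ab\in\mathcal{R}^{\dag}$; and $c^{*}$ commutes with $a^{*}$ and $a$ because $c$ commutes with $a$ and $a^{*}$. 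So Theorem \ref{thm3} applies, and it remains only to rewrite its conditions (ii) and (iii) in the present notation.

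For that I would compute what the blanket quantities $p,q,r,s$ of Theorem \ref{thm3} become when they are formed with first argument $b^{*}$ and second argument $a^{*}$. A short computation, using that $a^{\dag}a$ and $bb^{\dag}$ are Hermitian together with the identities $b^{\dag}=b^{*}(bb^{*})^{\dag}$, $b^{\dag*}=(bb^{*})^{\dag}b$ and their analogues for $a^{\dag}$ (all consequences of Proposition \ref{prop1}), gives
$$
a^{*}(a^{*})^{\dag}=a^{\dag}a=s,\qquad (b^{*})^{\dag}(b^{*})^{\dag*}=b^{\dag*}b^{\dag}=(bb^{*})^{\dag}=r^{\dag},
$$
$$
a^{*}(a^{*})^{*}=a^{*}a=(a^{\dag}a^{\dag*})^{\dag}=q^{\dag},\qquad (b^{*})^{\dag}b^{*}=b^{\dag*}b^{*}=bb^{\dag}=p,
$$
where the identity $(xx^{*})^{\dag}=x^{\dag*}x^{\dag}$ is invoked with $x=b$ and with $x=a^{\dag}$. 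Likewise the term ``$b^{\dag*}$'' occurring in Theorem \ref{thm3}(ii) becomes $(a^{*})^{\dag*}=a^{\dag}$, the leading ``$a$'' becomes $b^{*}$, and ``$c^{*}$'' becomes $c$.

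Substituting these into Theorem \ref{thm3}(ii), i.e. into ``$a(cpq-qp)b^{\dag*}c^{*}=0$ and $a(rsc^{*}-sr)b^{\dag*}=0$'', yields exactly the two equations of Theorem \ref{thm4}(ii); substituting into Theorem \ref{thm3}(iii), i.e. into ``$scpqpc^{*}=qpc^{*}$ and $srspc^{*}=sr$'', yields exactly the two equations of Theorem \ref{thm4}(iii). Hence (i)$\Leftrightarrow$(ii)$\Leftrightarrow$(iii) for the present data follows from the corresponding equivalence in Theorem \ref{thm3}. I do not expect a genuine obstacle here; the only delicate point is the bookkeeping of the substitution $(a,b,c)\mapsto(b^{*},a^{*},c^{*})$ and the verification of the four displayed identities for the transformed $p,q,r,s$, in particular the identifications $a^{*}a=q^{\dag}$ and $b^{\dag*}b^{\dag}=r^{\dag}$, which is where the Moore-Penrose invertibility of $a$ and $b$ is genuinely used.
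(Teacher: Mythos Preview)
Your proposal is correct and follows essentially the same route as the paper: both reduce (i) to $(b^{*}a^{*})^{\dag}=c^{*}(a^{*})^{\dag}(b^{*})^{\dag}$, compute that the blanket quantities $p,q,r,s$ for the pair $(b^{*},a^{*})$ become $s,\,r^{\dag},\,q^{\dag},\,p$ respectively, and then read off (ii) and (iii) from Theorem~\ref{thm3}. Your write-up is in fact more explicit than the paper's about why $q_{1}=r^{\dag}$ and $r_{1}=q^{\dag}$ (via $(xx^{*})^{\dag}=x^{\dag*}x^{\dag}$), which the paper only cites from Koliha--Patr\'{\i}cio.
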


\begin{proof}Recall that given $h\in\mathcal{R}$, necessary and sufficient for $h$ to belong to
 $\mathcal{R}^{\dag}$
is that $h^*\in  \mathcal{R}^{\dag}$, (see Theorem 5.4 in  Koliha, Patr\'{\i}cio (2002)). Moreover, in this case $(h^*)^{\dag}=(h^{\dag})^*$.
It is not difficult to prove that the identity $(ab)^{\dag}= b^{\dag}a^{\dag}c$
is equivalent to
$$
(b^*a^*)^{\dag}= c^*(a^*)^{\dag}(b^*)^{\dag}.
$$

\indent On the other hand, denote by $p_1$, $q_1$, $r_1$ and $s_1$
the elements of $\mathcal{R}$ corresponding to $p$, $q$, $r$ and $s$
defined using $b^*a^*$ instead of $ab$. Then, it is easy to prove that
$$
p_1= s,\hskip1truecm s_1= p.
$$
\indent In addition, according to  the proof of Theorem 5.3  in Koliha, Patr\'{\i}cio (2002),
$$
q_1=r^{\dag}, \hskip1truecm r_1=q^{\dag}.
$$
\par
\indent To conclude the proof, apply Theorem 2.3 to $b^*$,  $a^*$, $b^*a^*$ and $c^*$
in place of $a$, $b$, $ab$ and $c$.
\end{proof}

\begin{thm}\label{thm5} Let $\mathcal{R}$ be a ring with involution.
Consider  $a,b \in\mathcal{R}^{\dag}$ and $c\in \mathcal{R}$
such that $cab\in \mathcal{R}^{\dag}$.
Then, if $c$ commutes with  $a$ and  $a^*$, the following statements are equivalent:\par
\noindent (i) $(cab)^{\dag}= b^{\dag}a^{\dag}$;\par
\noindent (ii) $b^{\dag}(csr-rs)a^*c^*=0$ and $b^{\dag}(qpc^*-pq)a^*=0$;\par
\noindent (iii) $pcsrsc^*=rsc^*$ and  $pqpsc^*=pq$.\par

\end{thm}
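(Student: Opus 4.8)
The plan is to derive Theorem~\ref{thm5} from Theorem~\ref{thm3} applied to the triple $(b^{\dag},a^{\dag},c)$, that is, with $b^{\dag}$ in the role of ``$a$'', $a^{\dag}$ in the role of ``$b$'', and the same $c$ as weight. First I would verify that the hypotheses carry over. Since $a\in\mathcal{R}^{\dag}$ and $c$ commutes with $a$ and $a^*$, Proposition~\ref{prop2} shows that $c$ commutes with $a^{\dag}$ and $a^{\dag*}=(a^{\dag})^*$, i.e.\ with the new ``second factor'' $a^{\dag}$ and its adjoint, which is what Theorem~\ref{thm3} demands of its weight. Next I would compute the blanket elements attached to the product $b^{\dag}a^{\dag}$: using $(b^{\dag})^{\dag}=b$ and $(a^{\dag})^{\dag}=a$, the element ``$p$'' becomes $a^{\dag}a=s$, ``$q$'' becomes $bb^*=r$, ``$r$'' becomes $a^{\dag}a^{\dag*}=q$ and ``$s$'' becomes $bb^{\dag}=p$, while the quantity playing the role of $b^{\dag*}$ is $(a^{\dag})^{\dag*}=a^*$; in other words the substitution amounts to the interchanges $p\leftrightarrow s$ and $q\leftrightarrow r$.

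Feeding these data into Theorem~\ref{thm3}, statement (i) for $(b^{\dag},a^{\dag},c)$ reads $(b^{\dag}a^{\dag})^{\dag}=c\,(a^{\dag})^{\dag}(b^{\dag})^{\dag}=cab$, while statements (ii) and (iii), after the substitution together with $p=p^*$ and $q=q^*$, become \emph{verbatim} conditions (ii) and (iii) of the present theorem. Since the Moore--Penrose inverse is an involution on $\mathcal{R}^{\dag}$ and $cab\in\mathcal{R}^{\dag}$ is assumed, the identity $(b^{\dag}a^{\dag})^{\dag}=cab$ is equivalent to $(cab)^{\dag}=b^{\dag}a^{\dag}$. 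Thus, once Theorem~\ref{thm3} can be applied to $(b^{\dag},a^{\dag},c)$, all three equivalences follow.

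The only delicate point is that Theorem~\ref{thm3} carries the standing assumption that the product of its first two arguments be Moore--Penrose invertible, i.e.\ here $b^{\dag}a^{\dag}\in\mathcal{R}^{\dag}$ --- and this is \emph{not} among our hypotheses, nor does it follow in general from $cab\in\mathcal{R}^{\dag}$. I would bypass it by using the internal structure of the proof of Theorem~\ref{thm3} rather than its statement, so as to close the three-cycle (i)$\Rightarrow$(ii)$\Rightarrow$(iii)$\Rightarrow$(i): the step (iii)$\Rightarrow$(i) there consists in deducing from (iii) --- after multiplication by suitable hermitian idempotents and simplification via Proposition~\ref{prop1} --- the two identities of Proposition~\ref{prop1} relating $b^{\dag}a^{\dag}$ and $cab$, so that Proposition~\ref{prop1} itself forces both $b^{\dag}a^{\dag}\in\mathcal{R}^{\dag}$ and $(cab)^{\dag}=b^{\dag}a^{\dag}$ (our (i)); the step (i)$\Rightarrow$(ii) may be invoked once our (i) is known, since then $b^{\dag}a^{\dag}=(cab)^{\dag}\in\mathcal{R}^{\dag}$; and the step (ii)$\Rightarrow$(iii) is purely algebraic and needs no Moore--Penrose invertibility of the product, so it transfers as it stands. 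Equivalently --- and more self-contained --- one can simply transcribe the three-step proof of Theorem~\ref{thm3} with $cab$ in place of $ab$ and $b^{\dag}a^{\dag}$ in place of $cb^{\dag}a^{\dag}$; the only extra elementary identities needed are $pr=r$, $qs=q$, $sa^*=a^*$, $s^2=s$ and $b^{\dag}p=b^{\dag}$ (all immediate from the Penrose equations), together with Proposition~\ref{prop2} to move $c$ past $a^{\dag}$ and $a^{\dag*}$, hence past $q$ and $s$. I expect this bookkeeping about the Moore--Penrose invertibility of $b^{\dag}a^{\dag}$ to be the only real obstacle; the algebra is routine.
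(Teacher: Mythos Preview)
Your approach is essentially the paper's own: it too applies Theorem~\ref{thm3} to $b^{\dag}$, $a^{\dag}$, $c$ after observing that $(cab)^{\dag}=b^{\dag}a^{\dag}$ is equivalent to $(b^{\dag}a^{\dag})^{\dag}=cab$, and records the same substitution $p_2=s$, $q_2=r$, $r_2=q$, $s_2=p$. Your explicit handling of the standing hypothesis $b^{\dag}a^{\dag}\in\mathcal{R}^{\dag}$ is more careful than the paper's (which simply asserts the reduction and invokes Theorem~\ref{thm3}), but the argument is the same.
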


\begin{proof} Note that $cab\in \mathcal{R}^{\dag}$ and $(cab)^{\dag}= b^{\dag}a^{\dag}$
if and only if $b^{\dag}a^{\dag}\in\mathcal{R}^{\dag}$ and $( b^{\dag}a^{\dag})^{\dag}= cab$.\par

\indent As in the proof of Theorem \ref{thm4}, $p_2$, $q_2$, $r_2$ and $s_2$
 denote the elements of $\mathcal{R}$ corresponding to $p$, $q$, $r$ and $s$
defined using $b^{\dag}a^{\dag}$ instead of  $ab$. Then, it is easy to prove that
$$
p_2=s,\hskip.5truecm q_2=r,\hskip.5truecm r_2=q,\hskip.5truecm s_2=p.
$$
\indent To conclude the proof, apply Theorem 2.3 to $b^{\dag}$,  $a^{\dag}$, $b^{\dag}a^{\dag}$
and $c$ in place of $a$, $b$, $ab$ and $c$.
\end{proof}
\markright{\hskip5truecm BOASSO ET AL. }
\begin{thm}\label{thm6} Let $\mathcal{R}$ be a ring with involution.
Consider $a,b \in\mathcal{R}^{\dag}$ and $c\in \mathcal{R}$
such that   $abc\in \mathcal{R}^{\dag}$.
Then, if $c$ commutes with $b$ and $b^*$, the following statements are equivalent:\par
\noindent (i) $(abc)^{\dag}= b^{\dag}a^{\dag}$;\par
\noindent (ii)  $a^{\dag*}(c^*pq^{\dag}-q^{\dag}p)bc=0$ and  $a^{\dag*}(r^{\dag}sc-sr^{\dag})b=0$;\par
\noindent (iii) $sc^*pq^{\dag}pc=q^{\dag}pc$ and  $sr^{\dag}spc=sr^{\dag}$.\par
\end{thm}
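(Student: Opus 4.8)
The plan is to reduce the statement to Theorem~\ref{thm4}, in the same way that Theorem~\ref{thm5} was reduced to Theorem~\ref{thm3}, by viewing $(abc)^{\dag}=b^{\dag}a^{\dag}$ as a weighted reverse order law with the weight on the right for the pair $b^{\dag},a^{\dag}$. Since $abc\in\mathcal{R}^{\dag}$ and $(x^{\dag})^{\dag}=x$ for every $x\in\mathcal{R}^{\dag}$, statement~(i) is equivalent to
$$
b^{\dag}a^{\dag}\in\mathcal{R}^{\dag}\quad\text{and}\quad(b^{\dag}a^{\dag})^{\dag}=abc ,
$$
exactly as at the beginning of the proof of Theorem~\ref{thm5}; and, since $(a^{\dag})^{\dag}=a$ and $(b^{\dag})^{\dag}=b$, the second condition reads $(b^{\dag}a^{\dag})^{\dag}=(a^{\dag})^{\dag}(b^{\dag})^{\dag}c$, which is precisely condition~(i) of Theorem~\ref{thm4} with $b^{\dag}$, $a^{\dag}$, $b^{\dag}a^{\dag}$ and $c$ in place of $a$, $b$, $ab$ and $c$. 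The hypothesis needed there, namely that $c$ commute with $b^{\dag}$ and $b^{\dag*}$, follows from Proposition~\ref{prop2} applied to $b$, since $c$ commutes with $b$ and $b^{*}$.

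It then remains to translate conditions~(ii) and~(iii) of Theorem~\ref{thm4} under this substitution. First I would compute the quadruple $(p,q,r,s)$ that the blanket notation of this section attaches to the pair $b^{\dag},a^{\dag}$, i.e.\ with $b^{\dag}a^{\dag}$ in place of $ab$; using $(a^{\dag})^{\dag}=a$, $(b^{\dag})^{\dag}=b$ and the fact that $p$ and $s$ are hermitian one finds, exactly as for $p_{2},q_{2},r_{2},s_{2}$ in the proof of Theorem~\ref{thm5}, that this quadruple equals $(s,r,q,p)$; in particular the elements $r^{\dag}$ and $q^{\dag}$ appearing in Theorem~\ref{thm4}(ii)--(iii) become $q^{\dag}$ and $r^{\dag}$ respectively, both of which exist since $q^{\dag}=a^{*}a$ and $r^{\dag}=b^{\dag*}b^{\dag}$. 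Inserting this dictionary, and recalling that the two factors now occur in the opposite order --- so that the $a^{\dag}$ in Theorem~\ref{thm4}(ii) becomes $b$ and the $b^{*}$ there becomes $a^{\dag*}$ --- one checks that Theorem~\ref{thm4}(ii) becomes exactly the two identities in~(ii) and Theorem~\ref{thm4}(iii) becomes exactly the two identities in~(iii). An appeal to Theorem~\ref{thm4} then finishes the proof.

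No new computation is required; the one step that needs care is the dictionary of substitutions --- the pairing $p\leftrightarrow s$ and $q\leftrightarrow r$, with the attendant exchange $r^{\dag}\leftrightarrow q^{\dag}$, together with the order reversal of the two factors --- since a slip there would yield~(ii) and~(iii) in a slightly different, though equivalent, form. As a cross-check one may argue instead by taking adjoints: $(abc)^{\dag}=b^{\dag}a^{\dag}$ is equivalent to $(c^{*}b^{*}a^{*})^{\dag}=a^{\dag*}b^{\dag*}$ and $abc\in\mathcal{R}^{\dag}$ to $c^{*}b^{*}a^{*}=(abc)^{*}\in\mathcal{R}^{\dag}$, so that Theorem~\ref{thm5} applies to $b^{*}$, $a^{*}$, $c^{*}$ in place of $a$, $b$, $c$; this variant transfers the Moore-Penrose invertibility hypothesis directly and leads to the same conditions~(ii) and~(iii).
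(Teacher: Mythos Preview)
Your argument is correct. It differs from the paper's proof only in the choice of reduction target: the paper rewrites $(abc)^{\dag}=b^{\dag}a^{\dag}$ as $(a^{\dag*}b^{\dag*})^{\dag}=c^{*}b^{*}a^{*}$ and applies Theorem~\ref{thm3} directly to the pair $a^{\dag*},b^{\dag*}$ with weight $c^{*}$, obtaining the dictionary $p_{3}=p$, $q_{3}=q^{\dag}$, $r_{3}=r^{\dag}$, $s_{3}=s$. You instead take the Moore--Penrose inverse (as in Theorem~\ref{thm5}) to land in the setting of Theorem~\ref{thm4} for the pair $b^{\dag},a^{\dag}$ with the same weight $c$, using the dictionary $(p,q,r,s)\mapsto(s,r,q,p)$ already computed in the proof of Theorem~\ref{thm5}. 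Both routes are one-step reductions and yield exactly conditions~(ii) and~(iii); your cross-check via Theorem~\ref{thm5} and the adjoint is a third equivalent reduction and has the mild advantage, as you note, that the Moore--Penrose invertibility hypothesis $c^{*}b^{*}a^{*}\in\mathcal{R}^{\dag}$ is inherited directly from $abc\in\mathcal{R}^{\dag}$ rather than being absorbed into statement~(i).
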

\begin{proof} It is easy to prove that the first statement is equivalent to
$$
(a^{\dag*}b^{\dag*})^{\dag}= c^*b^*a^*.
$$

\indent As in Theorem  \ref{thm4} and Theorem \ref{thm5}, denote by
$p_3$, $q_3$, $r_3$ and $s_3$
the elements of $\mathcal{R}$ corresponding to $p$, $q$, $r$ and $s$
defined using  $a^{\dag*}b^{\dag*}$ instead of  $ab$. Then, using the proof of Theorem 5.3 in   Koliha, Patr\'{\i}cio (2002), we prove that
$$
p_3=p,\hskip.5truecm q_3=q^{\dag},\hskip.5truecm r_3=r^{\dag},\hskip.5truecm s_3=s.
$$
\indent To conclude the proof, apply Theorem \ref{thm3} to  $a^{\dag*}$, $b^{\dag*}$,
 $a^{\dag*}b^{\dag*}$ and $c^*$  in place of $a$, $b$, $ab$ and $c$.
\end{proof}

\indent Specializing to the case  of an algebra with involution over the complex numbers $\mathbb C$ ($\mathcal{R}=\mathcal{A}$),
 $\overline{\lambda}\in \mathbb C$ will stand for  the \it complex conjugate \rm of $\lambda\in \mathbb C$. Note that $(\lambda a)^*=\overline{\lambda}a^*$ for any $a$
in the algebra. In particular we can now allow the element  $c\in \mathcal{A}$ be a scalar multiple of the identity, i.e.,
$c=\lambda e$.\par

 \begin{cor}\label{cor7} Let $\mathcal{A}$ be an algebra with involution over $\mathbb C$.
Consider $a,b \in\mathcal{A}^{\dag}$ such that  $ab\in \mathcal{A}^{\dag}$,
and $\lambda\in \mathbb C$.
Then, the following statements are equivalent:\par
\noindent (i) $(ab)^{\dag}=\lambda b^{\dag}a^{\dag}$;\par
\noindent (ii)  $a(\lambda pq-qp)b^{\dag*}=0$ and  $a(rs\overline{\lambda}-sr)b^{\dag*}=0$;\par
\noindent (iii) $\lambda spqp=qp$ and  $\overline{\lambda}srsp=sr$.\par
\end{cor}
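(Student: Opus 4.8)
The plan is to obtain Corollary~\ref{cor7} as the special case of Theorem~\ref{thm3} in which $c$ is the scalar $\lambda e$. This substitution is admissible, since $\lambda e$ commutes with every element of $\mathcal{A}$, in particular with $b$ and $b^{*}$, so the hypotheses of Theorem~\ref{thm3} are met; note also $(\lambda e)^{*}=\overline{\lambda}e$. Putting $c=\lambda e$ into statements (ii) and (iii) of Theorem~\ref{thm3} and pulling the scalars out, (ii) becomes
$$
\overline{\lambda}\,a(\lambda pq-qp)b^{\dag*}=0,\qquad a(\overline{\lambda}rs-sr)b^{\dag*}=0,
$$
and (iii) becomes
$$
|\lambda|^{2}spqp=\overline{\lambda}qp,\qquad \overline{\lambda}srsp=sr .
$$
When $\lambda\neq 0$ I would simply cancel the factor $\overline{\lambda}$ from the first equality of each pair (the second equalities already being those of the corollary), so that Theorem~\ref{thm3} directly gives the equivalence of (i), (ii) and (iii).

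The only point needing separate treatment — and the one I expect to be the main obstacle — is the degenerate case $\lambda=0$, where the cancellation above is unavailable. Here I would argue by hand that (i), (ii) and (iii) are each equivalent to $ab=0$. Indeed (i) reads $(ab)^{\dag}=0$, which by $ab=ab(ab)^{\dag}ab$ is the same as $ab=0$. From Proposition~\ref{prop1} (with the blanket notations $p,q,r,s$ of the section) one has
$$
asrb^{\dag*}=(aa^{\dag}a)(bb^{*}b^{\dag*})=ab,\qquad aqpb^{\dag*}=(aa^{\dag}a^{\dag*})(bb^{\dag}b^{\dag*})=a^{\dag*}b^{\dag*}=(b^{\dag}a^{\dag})^{*},
$$
together with $rb^{\dag*}=b$, $b^{\dag*}b^{*}=p$ and $sq=q$. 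The extra ingredient is the observation that $ab=0$ forces $b^{\dag}a^{\dag}=0$: from $ab=0$ we get $ap=abb^{\dag}=0$, hence $pa^{*}=0$, hence $pa^{\dag}=p(a^{*}a^{\dag*}a^{\dag})=0$, while $b^{\dag}=b^{\dag}bb^{\dag}=b^{\dag}p$, so $b^{\dag}a^{\dag}=b^{\dag}pa^{\dag}=0$.

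With these facts the case $\lambda=0$ closes. For (ii): the second equation is $asrb^{\dag*}=ab=0$, and once $ab=0$ holds the first reads $aqpb^{\dag*}=(b^{\dag}a^{\dag})^{*}=0$; conversely the second equation already yields $ab=0$. For (iii), which now says $qp=0$ and $sr=0$: from $sr=0$ we get $a^{\dag}ab=srb^{\dag*}=0$, hence $ab=a(a^{\dag}ab)=0$, hence $b^{\dag}a^{\dag}=0$, hence $qp=a^{\dag}(aqpb^{\dag*})b^{*}=0$; conversely $ab=0$ gives $sb=a^{\dag}ab=0$, so $sr=sbb^{*}=0$, and $qp=0$ by the same chain. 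Hence all three statements reduce to $ab=0$, and the corollary follows. Everything outside the identification of the $\lambda=0$ case and the small fact $ab=0\Rightarrow b^{\dag}a^{\dag}=0$ is routine scalar and idempotent bookkeeping.
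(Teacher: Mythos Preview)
Your argument is correct and follows exactly the paper's approach: the paper's entire proof is the single line ``Apply Theorem~\ref{thm3},'' i.e.\ specialize $c=\lambda e$. Your treatment of the degenerate case $\lambda=0$ is a genuine addition --- the raw substitution $c=0$ into Theorem~\ref{thm3}(ii),(iii) yields, for instance, only $asrb^{\dag*}=0$ and $0=sr$ (the equations carrying $c^{*}$ become vacuous), not the full pair stated in the corollary --- so your separate verification that (i), (ii) and (iii) each reduce to $ab=0$ in that case closes a gap the paper leaves implicit.
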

\begin{proof} Apply Theorem \ref{thm3}.
\end{proof}
\begin{rema}\label{rem7} \rm  Let $\mathcal{A}$ be a $C^*$-algebra and consider $a,b \in\mathcal{A}^{\dag}$
such that  $ab\in \mathcal{A}^{\dag}$. Let $p$, $q$, $r$ and $s$ be the elements of $\mathcal{A}$
defined before Theorem \ref{thm3}. Recall that, according to Remark 3.5 in Boasso (2006)
or  Greville (1966), $(ab)^{\dag}=b^{\dag}a^{\dag}$ if and only if $rs=sr$ and $pq=qp$. Note that according to
Theorem 7 in  Harte, Mbekhta (1992), $p$ and $q$ commute (respectively $r$ and $s$ commute)
if and only if $p$ and $q^{\dag}$ commute (respectively $s$  and $r^{\dag}$ commute).
What is more, these statements are equivalent to the at first sight weaker
conditions of Theorems 3.1-3.4 in Boasso (2006). \par

\indent When $\mathcal{R}$ a ring with involution, $a,b \in \mathcal{R}^{\dag}$ and
$(e-a^{\dag}a)b$ is left $*$-cancellable, necessary and
sufficient for  $ab$ to belong to $\mathcal{R}^{\dag}$ and $(ab)^{\dag}=b^{\dag}a^{\dag}$
is that $rs=sr$ and $pq^{\dag}=q^{\dag}p$ (see Theorem 3 in Koliha, Djordjevi\' c, Cvetkovi\' c (2007)). Note also that according to
 Proposition \ref{prop2},  $p$ and $q$ commute (respectively $r$ and $s$ commute)
if and only if $p$ and $q^{\dag}$ commute (respectively $s$  and $r^{\dag}$ commute).
Therefore, considering $c=e$, the conditions presented in Theorem \ref{thm3} are weaker
than the ones in Theorem 3 in Koliha, Djordjevi\' c, Cvetkovi\' c (2007)
and to prove them the cancellation property is not necessary. In particular,
while all the aforementioned results are equivalent in $C^*$-algebras,
in the case of  rings with involution, according to the characterization of Theorem \ref{thm3},
if the reverse order law is satisfied by  $a$ and $b$, the identities $rs=sr$ and $pq^{\dag}=q^{\dag}p$
need not to be satisfied. According to Theorem 3 in  Koliha, Djordjevi\' c, Cvetkovi\' c (2007), these equalities are satisfied
when the cancellation property is assumed.
\end{rema}

\section{\sfstp Weighted reverse order laws for $K$-inverses in prime rings}
\
\markright{\hskip4truecm WEIGHTED REVERSE ORDER LAWS }
\indent In this section, $ \mathcal{R}$ will be a prime ring with involution and $K\subseteq \{1,2,3,4\}$. For  $a,b\in \mathcal{R}$,
several weighted reverse order laws for  $K$-inverses of  $ab$ will be characterized. First we will present  some preliminary facts.
\par

\begin{rema}\label{rema8}\rm

Consider $a, b\in\mathcal{R}^{\dag}$ and $c\in \mathcal{R}$ such that $c$ commutes with $a$ and $a^*$.
Let  $p=bb^\dag$, $q=b^\dag b$ and $r=aa^\dag$. We have that $b=\bmatrix{cc} b&0\\0&0\endbmatrix_{p,q}$ and $a=\bmatrix{cc}a_1&a_2\\0&0\endbmatrix_{r,p}$. An arbitrary $b^{(1,3)}\in b\{1,3\}$ has the form
$b^{(1,3)}=\bmatrix{cc}b^\dag&0\\u&v\endbmatrix_{q,p}$, for some $u\in (e-q)\A p$ and $v\in (e-q)\A (e-p)$, and an arbitrary $a^{(1,3)}$ has the form
$a^{(1,3)}=a^\dag +(e-a^\dag a)x$, for some $x=\bmatrix{cc}x_1&x_2\\x_3&x_4\endbmatrix_{p,r}\in  \mathcal{R}$, i.e., $a^{(1,3)}=\bmatrix{cc}z_1&z_2\\z_3&z_4\endbmatrix_{p,r}$, where

\parbox{10.5cm}{\begin{eqnarray*}
&&z_1=a_1^*d^\dag +(e-a_1^*d^\dag a_1)x_1-a_1^*d^\dag a_2x_3,\\
&&z_2=(e-a_1^*d^\dag a_1)x_2-a_1^*d^\dag a_2x_4,\\
&&z_3=a_2^*d^\dag -a_2^*d^\dag a_1x_1+(e-a_2^*d^\dag a_2)x_3,\\
&&z_4=-a_2^*d^\dag a_1x_2+(e-a_2^*d^\dag a_2)x_4.
\end{eqnarray*}}     \hfill
\parbox{1cm}{\vskip -1.1cm$$\eqno{(3.1)}$$}

Also, $a^\dag=a^*(aa^*)^\dag=\bmatrix{cc} a_1^*d^\dag &0\\a_2^*d^\dag&0\endbmatrix_{p,r}$ (Theorem 5.3 in  Koliha, Patr\'{\i}cio (2002))
 where $d=aa^*=a_1a_1^*+a_2a_2^*$
and $d^\dag =(aa^*)^\dag$.

If  $c$ commutes with $a$ and $a^*$, it follows that $c=\bmatrix{cc}c_1&0\\0&c_2\endbmatrix_{r,r}$.

\end{rema}

\markright{\hskip5truecm BOASSO ET AL. }
\begin{thm}\label{thm9}Let $\mathcal{R}$ be a prime ring with involution.
Consider  $a, b\in\mathcal{R}^{\dag}$ and $c\in \mathcal{R}$ such that $c$ commutes with  $a$ and  $a^*$.
Then, following statements are equivalent:

\item $(i)$ $b\{1,3\}\cdot a\{1,3\}\cdot c\subseteq (ab)\{1,3\}$;

\item $(ii)$ $b^\dag a^\dag c\in ab\{1,3\}$, $b^\dag a^\dag \in ab\{1\}$ and  $a^\dag\in a(e-bb^\dag)\{1\}$.
\end{thm}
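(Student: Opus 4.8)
The plan is to reduce statement (i) to a condition in which the outer factor $b^{(1,3)}$ no longer appears and the inner factor $a^{(1,3)}$ enters only through one free parameter, and then to let prime-ness finish the job; keep $p=bb^{\dag}$ as in Remark~\ref{rema8}. The one elementary fact I would record first is: if $h\in\mathcal{R}$ has a $(1,3)$-inverse, then each $hy$ with $y\in h\{1,3\}$ is a Hermitian idempotent (by Penrose $(1)$ and $(3)$) and all these coincide; writing $g_{h}$ for their common value (so $g_{h}=hh^{\dag}$ when $h\in\mathcal{R}^{\dag}$), one has $h\{1,3\}=\{y\in\mathcal{R}:hy=g_{h}\}$. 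Indeed, for $y,y_{0}\in h\{1,3\}$: $hy_{0}\cdot hy=(hy_{0}h)y=hy$ by $(1)$; applying the involution, both $hy_{0}$ and $hy$ being Hermitian, gives $hy\cdot hy_{0}=hy$; but also $hy\cdot hy_{0}=(hyh)y_{0}=hy_{0}$ by $(1)$; so $hy=hy_{0}$, and the reverse inclusion is immediate from $(1)$ and $(3)$. Since either (i) or (ii) forces $b^{\dag}a^{\dag}c$ to be a $(1,3)$-inverse of $ab$ --- and if $ab$ has none, both statements fail trivially --- we may assume $ab$ has a $(1,3)$-inverse and write $g:=g_{ab}$.

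By Remark~\ref{rema8} every $b^{(1,3)}\in b\{1,3\}$ satisfies $bb^{(1,3)}=p$, so for arbitrary $b^{(1,3)}$ and $a^{(1,3)}$ the membership $b^{(1,3)}a^{(1,3)}c\in ab\{1,3\}$ is, by the fact just recorded, equivalent to $(ab)\,b^{(1,3)}a^{(1,3)}c=g$, i.e. to $ap\,a^{(1,3)}c=g$, which no longer involves $b^{(1,3)}$. Hence (i) holds iff $ap\,a^{(1,3)}c=g$ for every $a^{(1,3)}\in a\{1,3\}$. Using the description $a^{(1,3)}=a^{\dag}+(e-a^{\dag}a)x$ with $x$ free (Remark~\ref{rema8}) and isolating the $x=0$ term, this splits into the pair
$$
(*)\quad ap\,a^{\dag}c=g,\qquad\qquad(**)\quad ap(e-a^{\dag}a)\,x\,c=0\ \text{ for all }x\in\mathcal{R} .
$$
By the same fact applied to $ab$, $(*)$ is precisely $b^{\dag}a^{\dag}c\in ab\{1,3\}$, the first clause of (ii).

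Now prime-ness enters through $(**)$, which reads $ap(e-a^{\dag}a)\,\mathcal{R}\,c=\{0\}$ and hence forces $ap(e-a^{\dag}a)=0$ or $c=0$. If $c=0$: then $(**)$ is automatic, $(*)$ becomes $g=0$, i.e. $ab=0$, so $ap=abb^{\dag}=0$ and $a(e-p)=a$, and the last two clauses of (ii) collapse to $0=0$ and $aa^{\dag}a=a$; conversely the first clause of (ii) with $c=0$ also reduces to $ab=0$; so here (i) $\Leftrightarrow ab=0\Leftrightarrow$ (ii). If $c\neq0$: it remains to prove the purely ring-theoretic equivalence
$$
ap(e-a^{\dag}a)=0\iff\bigl(abb^{\dag}a^{\dag}ab=ab\ \text{ and }\ a(e-p)a^{\dag}a(e-p)=a(e-p)\bigr),
$$
i.e. the last two clauses of (ii). For ``$\Rightarrow$'', from $ap\,a^{\dag}a=ap$: $abb^{\dag}a^{\dag}ab=ap\,b=ab$, and $a(e-p)a^{\dag}a(e-p)=(a-ap)a^{\dag}a(e-p)=a(e-p)-ap(e-p)=a(e-p)$. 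For ``$\Leftarrow$'': right-multiplying the first identity by $b^{\dag}$ gives $ap\,a^{\dag}ap=ap$, expanding $a-ap$ in the second gives $ap\,a^{\dag}a(e-p)=0$, and adding yields $ap\,a^{\dag}a=ap$. Therefore (i) $\Leftrightarrow(*)\wedge(**)\Leftrightarrow$ (ii). (Incidentally, this route never uses that $c$ commutes with $a$ and $a^{*}$; the authors presumably need that hypothesis to make the block-matrix computation of Remark~\ref{rema8} go through.)

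The obstacle here is conceptual rather than computational: the key realisation is that $b\{1,3\}\cdot a\{1,3\}\cdot c$ can be handled one factor at a time, because $bb^{(1,3)}$ is constant along $b\{1,3\}$ and because membership in $ab\{1,3\}$ depends on an element only through its left product with $ab$. Once this is seen, the affine description of $a\{1,3\}$ splits (i) into a constant part --- exactly clause $1$ of (ii) --- and a part linear in $x$ that prime-ness annihilates, the latter being matched with clauses $2$ and $3$ by the short lemma above. The only genuinely fiddly steps are the book-keeping in that lemma and keeping the harmless case $c=0$ under control.
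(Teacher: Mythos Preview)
Your proof is correct and takes a genuinely different, more coordinate-free route than the paper's. The paper works entirely inside the $2\times 2$ block framework of Remark~\ref{rema8}: it expands $b^{(1,3)}a^{(1,3)}c\in(ab)\{1,3\}$ into three block identities in the components $a_1,a_2,c_1,d^\dag,z_1,z_2$, and then lets primeness act on the free block parameters $x_1,x_3$ separately to extract conditions that are identified one by one with the clauses of (ii). Your argument instead rests on the short lemma that $hy$ is constant over $y\in h\{1,3\}$; this removes the $b^{(1,3)}$-dependence in a single stroke (via $bb^{(1,3)}=p$) and reduces the $a^{(1,3)}$-dependence to one affine parameter, after which primeness acts directly on the ring element $ap(e-a^\dag a)$ rather than on block entries, and a short computation matches $ap(e-a^\dag a)=0$ with the last two clauses of (ii). Your route is shorter and, as you note, never uses that $c$ commutes with $a$ and $a^{*}$: the paper needs this hypothesis only to put $c$ into block-diagonal form with respect to $r=aa^\dag$, so that the matrix calculus of Remark~\ref{rema8} applies. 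On the other hand, the paper's investment in that framework is not wasted: the same block set-up is reused in Theorems~\ref{thm13} and~\ref{thm15}, so there is an economy of scale across the section that your one-off argument does not provide.
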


\begin{proof} Note that the case $ab=0$ is trivial. Hence, we will consider the case $ab\neq 0$.\par
Clearly $b\{1,3\}\cdot a\{1,3\}\cdot c\subseteq (ab)\{1,3\}$ is equivalent to the fact that for any  $a^{(1,3)}\in a\{1,3\}$ and any $b^{(1,3)}\in b\{1,3\}$:
$$
b^{(1,3)}a^{(1,3)}c\in(ab)\{1,3\}.
\eqno{(3.2)}
$$

 Using the matrix representations considered in Remark \ref{rema8}, we have that  $(3.2)$ is equivalent to

$$
\hbox{\rm (i)}\hskip.1truecm a_1z_1c_1a_1=a_1,\hskip1truecm  \hbox{\rm (ii)}\hskip.1truecma_1z_2c_2=0,
\hskip1truecm  \hbox{\rm (iii)}\hskip.1truecm (a_1z_1c_1)^*=a_1z_1c_1,
\eqno{(3.3)}
$$
where $z_1$ and $z_2$ are defined by $(3.1)$. Now, using $(3.1)$, the identity  $(3.3)$(i) is equivalent to
\begin{eqnarray*}
a_1a_1^*d^\dag c_1a_1+(a_1-a_1a_1^*d^\dag a_1)x_1c_1a_1-a_1a_1^*d^\dag a_2x_3c_1a_1=a_1.
\end{eqnarray*}
Since, $x_1$ and $x_3$ are arbitrary elements from appropriate subalgebras,  $(3.3)$(i)
is equivalent to
$$
\hbox{\rm (i)}\hskip0.2truecm a_1a_1^*d^\dag c_1a_1=a_1,\hskip0.5truecm \hbox{\rm (ii)}\hskip0.2truecm (a_1-a_1a_1^*d^\dag a_1)x_1c_1a_1=0,\hskip0.5truecm
\hbox{\rm (iii)}\hskip0.2truecm a_1a_1^*d^\dag a_2x_3c_1a_1=0.
\eqno{(3.4)}
$$

What is more, since  $a_1=rap$, $x_1=pxr$ and $c_1=rcr$, $(3.4)$(ii) is equivalent to
$$
 (a_1-a_1a_1^*d^\dag a_1)xc_1a_1=0,
$$
where $x\in\mathcal{R}$ is arbitrary. However, since $\mathcal{R}$ is a prime ring,
  $a_1-a_1a_1^*d^\dag a_1=0$ or $c_1a_1=0$.
\par

\indent Similarly, from   $(3.4)$(iii), we get that
 $a_1a_1^*d^\dag a_2=0$ or $c_1a_1=0$.\par
\indent Note that the case $c_1a_1=0$ implies that $ab=0$, which is not possible.

 Therefore, $c_1a_1\neq0$, and equation $(3.3)$(i) is equivalent to
$$
\hbox{\rm (i)}\hskip.1truecm a_1a_1^*d^\dag c_1a_1=a_1,\hskip1truecm \hbox{\rm (ii)}\hskip.1truecm a_1-a_1a_1^*d^\dag a_1=0,\hskip1truecm
\hbox{\rm (iii)}\hskip.1truecm a_1a_1^*d^\dag a_2=0,
\eqno{(3.5)}
$$

i.e.,
$$
\hbox{\rm (i)}\hskip.1truecm b^\dag a^\dag c\in ab\{1\} ,\hskip1truecm \hbox{\rm (ii)}\hskip.1truecm b^\dag a^\dag \in ab\{1\},\hskip1truecm
\hbox{\rm (iii)}\hskip.1truecm a^\dag\in a(e-bb^\dag)\{1\}.
\eqno{(3.6)}
$$

\indent Now, $(3.5)$ imply that  $a_1z_2=0$ and the fact that  $a_1z_1c_1=a_1a_1^*d^\dag c_1$ is hermitian is  equivalent to the fact that $abb^\dag a^\dag c$ is hermitian, i.e. $ b^\dag a^\dag c\in ab\{3\}$.
\end {proof}

%\markright{\hskip5truecm BOASSO ET AL. }
From the proof of Theorem \ref{thm9} it follows that under the assumption  $a_2=a(e-bb^\dagger)\in \mathcal{R}^{\dag}$, condition $(3.5)(ii)$ implies condition $(3.5)(iii)$:
$$
a_1-a_1a_1^*d^\dag a_1=0 \Rightarrow a_1-dd^\dag a_1+a_2a_2^*d^\dag a_1=0\Rightarrow a_2^*d^\dag a_1=0,
$$
so we get the following corollary.

\begin{cor}\label{cor10}Let $\mathcal{R}$ be a prime ring with involution.
Consider  $a, b\in\mathcal{R}^{\dag}$ such that $a(e-bb^\dagger)\in \mathcal{R}^{\dag}$  and let $c\in \mathcal{R}$ such that $c$ commutes with  $a$ and  $a^*$.
Then, following statements are equivalent:

\item $(i)$ $b\{1,3\}\cdot a\{1,3\}\cdot c\subseteq (ab)\{1,3\}$;

\item $(ii)$ $b^\dag a^\dag c\in ab\{1,3\}$, $b^\dag a^\dag \in ab\{1\}$.
\end{cor}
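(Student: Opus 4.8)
The plan is to deduce Corollary~\ref{cor10} directly from Theorem~\ref{thm9}, the point being that the extra hypothesis $a_2:=a(e-bb^\dag)\in\mathcal{R}^{\dag}$ makes the third clause of Theorem~\ref{thm9}(ii) --- namely $a^\dag\in a(e-bb^\dag)\{1\}$ --- redundant. All hypotheses of Theorem~\ref{thm9} are in force ($\mathcal{R}$ prime with involution, $a,b\in\mathcal{R}^{\dag}$, and $c$ commuting with $a$ and $a^*$), so that theorem already shows statement (i) of the corollary to be equivalent to the conjunction ``$b^\dag a^\dag c\in ab\{1,3\}$, $b^\dag a^\dag\in ab\{1\}$, and $a^\dag\in a(e-bb^\dag)\{1\}$''. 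Hence (i)$\Rightarrow$(ii) is immediate, and the whole task is to show that (ii) forces the missing clause $a^\dag\in a(e-bb^\dag)\{1\}$; plugging that back into Theorem~\ref{thm9} then gives (ii)$\Rightarrow$(i).

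To do this I would use the matrix representation of Remark~\ref{rema8}: write $a=\bmatrix{cc}a_1&a_2\\0&0\endbmatrix_{r,p}$ with $r=aa^\dag$, $p=bb^\dag$, $a_1=rap$, $a_2=a(e-p)$, and set $d=aa^*=a_1a_1^*+a_2a_2^*$, $d^\dag=(aa^*)^\dag$. By the passage from $(3.5)$ to $(3.6)$ in the proof of Theorem~\ref{thm9}, the clause $b^\dag a^\dag\in ab\{1\}$ is exactly the identity $a_1=a_1a_1^*d^\dag a_1$, and the clause $a^\dag\in a(e-bb^\dag)\{1\}$ is exactly $a_1a_1^*d^\dag a_2=0$. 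So everything comes down to the algebraic implication: if $a_2\in\mathcal{R}^{\dag}$, then $a_1=a_1a_1^*d^\dag a_1$ implies $a_1a_1^*d^\dag a_2=0$.

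This is the computation indicated just before the statement. Writing $a_1a_1^*=d-a_2a_2^*$ and using $dd^\dag a_1=a_1$ --- valid since $dd^\dag=aa^*(aa^*)^\dag=aa^\dag=r$ (from $a^\dag=a^*(aa^*)^\dag$) and $ra_1=a_1$ --- the identity $a_1=a_1a_1^*d^\dag a_1$ becomes $a_2a_2^*d^\dag a_1=0$. Now I invoke the Moore-Penrose invertibility of $a_2$: left-multiplying by $a_2^\dag$ and using $a_2^\dag a_2a_2^*=a_2^*$ (which follows from the hermiticity of $a_2^\dag a_2$ together with $a_2^*a_2^{\dag*}a_2^*=a_2^*$) gives $a_2^*d^\dag a_1=0$, whence, taking adjoints ($d^\dag$ is hermitian) and left-multiplying by $a_1$, $a_1a_1^*d^\dag a_2=0$, which is the desired clause.

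The one genuinely load-bearing step, and the only place the new hypothesis is used, is the cancellation of $a_2a_2^*$ down to $a_2^*$: in a general prime ring $a_2a_2^*d^\dag a_1=0$ does not force $a_2^*d^\dag a_1=0$, which is precisely why Theorem~\ref{thm9} keeps the third clause as a separate condition. Everything else is routine manipulation of the $2\times2$ matrix picture of Remark~\ref{rema8} together with the Penrose identities from Section~2, so I expect no further obstacle.
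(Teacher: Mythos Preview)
Your argument is correct and mirrors the paper's own proof exactly: the paper records, just before the corollary, the implication $a_1-a_1a_1^*d^\dag a_1=0\Rightarrow a_2a_2^*d^\dag a_1=0\Rightarrow a_2^*d^\dag a_1=0$ (the last step using $a_2\in\mathcal{R}^\dag$), which is precisely the chain you spell out. You have simply made explicit the justifications for $dd^\dag a_1=a_1$, for $a_2^\dag a_2a_2^*=a_2^*$, and for the final passage from $a_2^*d^\dag a_1=0$ to $a_1a_1^*d^\dag a_2=0$, all of which the paper leaves implicit.
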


\indent In the following theorem, for given $M\subseteq \mathcal{R}$, $M^*$ will stand for the set
of all adjoint elements of $M$, i.e., $M^*=\{x^*\colon x\in M\}$.\par

\begin{thm}\label{thm11}Let $\mathcal{R}$ be a prime ring with involution.
Consider  $a, b\in\mathcal{R}^{\dag}$ and $c\in \mathcal{R}$ such that $c$ commutes with $b$ and $b^*$.
Then, following statements are equivalent:

\item $(i)$ $c\cdot b\{1,4\}\cdot a\{1,4\}\subseteq (ab)\{1,4\}$;

\item $(ii)$ $cb^\dag a^\dag \in ab\{1,4\}$, $b^\dag a^\dag \in ab\{1\}$ and $b^\dag\in (e-a^\dag a)b\{1\}$.
\end{thm}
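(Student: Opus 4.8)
The plan is to mimic exactly the strategy used in the proof of Theorem \ref{thm9}, but now working with $\{1,4\}$-inverses instead of $\{1,3\}$-inverses, and transferring everything to the adjoint side. The cleanest route is to observe that $c\cdot b\{1,4\}\cdot a\{1,4\}\subseteq (ab)\{1,4\}$ is equivalent, after taking adjoints of every element in sight, to a $\{1,3\}$-type inclusion for $b^*a^*$. Concretely, $x\in h\{1,4\}$ if and only if $x^*\in h^*\{1,3\}$ (since equation $(1)$ for $h,x$ becomes equation $(1)$ for $h^*,x^*$, and equation $(4)$, $(xh)^*=xh$, becomes equation $(3)$, $(h^*x^*)^*=h^*x^*$). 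Hence $c\cdot b\{1,4\}\cdot a\{1,4\}\subseteq (ab)\{1,4\}$ holds if and only if
$$
a^*\{1,3\}\cdot b^*\{1,3\}\cdot c^*\subseteq (b^*a^*)\{1,3\}=(ab)^*\{1,3\}.
$$
Now $c$ commutes with $b$ and $b^*$ if and only if $c^*$ commutes with $b^*$ and $b$, i.e. with $(b^*)^*$ and $b^*$. So Theorem \ref{thm9}, applied with $b^*$ in place of $a$, $a^*$ in place of $b$, and $c^*$ in place of $c$, is directly applicable and yields the equivalence with
$$
(b^*)^\dag(a^*)^\dag c^*\in (b^*a^*)\{1,3\},\quad (b^*)^\dag(a^*)^\dag\in (b^*a^*)\{1\},\quad (b^*)^\dag\in b^*(e-a^*{}^\dag a^*)\{1\}.
$$

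The remaining work is bookkeeping: translate each of these three conditions back through the adjoint. Using $(h^*)^\dag=(h^\dag)^*=h^{\dag *}$ and $(h^*)^{\dag *}=h^\dag$ repeatedly, one has $(b^*)^\dag(a^*)^\dag c^*=b^{\dag*}a^{\dag*}c^*=(cb^\dag a^\dag)^*$, and $(b^*a^*)=(ab)^*$, so the first condition becomes $(cb^\dag a^\dag)^*\in (ab)^*\{1,3\}$, which is exactly $cb^\dag a^\dag\in ab\{1,4\}$. Likewise the second condition $b^{\dag*}a^{\dag*}\in(ab)^*\{1\}$ is $b^\dag a^\dag\in ab\{1\}$ after taking adjoints (equation $(1)$ is self-adjoint under the correspondence). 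For the third condition, note $a^*{}^\dag a^*=a^{\dag*}a^*=(aa^\dag)^*=aa^\dag$ since $aa^\dag$ is hermitian, so $e-a^*{}^\dag a^*=e-aa^\dag$; thus $(b^*)^\dag\in b^*(e-aa^\dag)\{1\}$, i.e. $b^{\dag*}\in\big((e-aa^\dag)^*b^*\big)^{*}{}$... more carefully, $b^{\dag*}$ is a $\{1\}$-inverse of $b^*(e-aa^\dag)=\big((e-aa^\dag)b\big)^*$, which by the adjoint correspondence for equation $(1)$ says $b^\dag\in\big((e-aa^\dag)b\big)\{1\}$. But $(e-aa^\dag)b$ is not quite what appears in (ii); here I need to double-check the placement. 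In Theorem \ref{thm9} the third condition is $a^\dag\in a(e-bb^\dag)\{1\}$, coming from the ``$a_2$'' block $a(e-bb^\dag)$; under the substitution $a\leftrightarrow b^*$, $b\leftrightarrow a^*$ this block becomes $b^*(e-a^\dag a^{\dag*})=b^*(e-a^*{}^\dag a^*)=b^*(e-aa^\dag)$, wait — I must instead get the block relative to $a^*$, namely $a^*(e-b^*{}^\dag b^*)$; recomputing: with roles as above the ``first factor'' is $b^*$ and the ``second factor'' is $a^*$, and the relevant idempotent is built from the first factor's range projection, giving the block $a^*(e-b^*b^{*\dag})$. Tracking this carefully, the third condition should come out as $b^\dag\in(e-a^\dag a)b\{1\}$, matching (ii); I would verify this index-by-index against the block decomposition in Remark \ref{rema8} rather than trust the heuristic.

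The main obstacle is precisely this last translation: getting the correct idempotent and the correct left/right placement in the third condition, $b^\dag\in(e-a^\dag a)b\{1\}$. The adjoint symmetry swaps $a$ and $b$ but also swaps ``left'' and ``right'' (because $(xy)^*=y^*x^*$), and it swaps the roles of the two hermitian projections $aa^\dag$ and $a^\dag a$, so one must be careful that the projection $e-a^\dag a$ (not $e-aa^\dag$) and the one-sided factor $(e-a^\dag a)b$ (not $b(e-a^\dag a)$) emerge. I would resolve this either by redoing the short block computation of Remark \ref{rema8} with $b^*,a^*$ in the roles of $a,b$ — noting that now it is $b^*b^{*\dag}=b^{\dag*}b^* =(b^\dag b)^*=b^\dag b$ that is the relevant projection, so that $e-b^\dag b$ appears, and $(e-b^\dag b)$ multiplied on the appropriate side of $a^*$ gives, after a final adjoint, $(e-a^\dag a)b$... — or, more safely, by simply citing Theorem \ref{thm9} as a black box and verifying the three output conditions transform as claimed, which is a routine but genuinely error-prone computation with the $\dag$/$*$ identities collected in the Introduction. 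Everything else (the ``case $ab=0$ is trivial'' remark, the primeness argument eliminating the degenerate branch) is inherited verbatim from Theorem \ref{thm9} through the adjoint substitution.
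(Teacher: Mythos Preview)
Your approach is exactly the one the paper uses: observe that $(x\{1,4\})^*=x^*\{1,3\}$, so statement (i) is equivalent to $a^*\{1,3\}\cdot b^*\{1,3\}\cdot c^*\subseteq (b^*a^*)\{1,3\}$, and then apply Theorem~\ref{thm9} with $b^*,a^*,c^*$ in place of $a,b,c$. The paper's proof is in fact just those two sentences; your additional bookkeeping to translate the three output conditions back is correct in its conclusion, though your intermediate discussion wobbles on which projection to use --- for the record, with $b^*$ playing the role of ``$a$'' and $a^*$ the role of ``$b$'', the relevant projection from Remark~\ref{rema8} is $a^*(a^*)^{\dag}=a^*a^{\dag*}=(a^{\dag}a)^*=a^{\dag}a$, so the third condition reads $b^{\dag*}\in b^*(e-a^{\dag}a)\{1\}$, and one adjoint gives $b^{\dag}\in (e-a^{\dag}a)b\{1\}$ as required.
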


\begin{proof} Note that for  given $x\in  \mathcal{R}$, $(x\{1,4\})^*=x^*\{1,3\}$. Therefore, the first statement
is equivalent to $  a^*\{1,3\}\cdot b^*\{1,3\}\cdot c^*\subseteq (b^*a^*)\{1,3\}$. Now  apply Theorem \ref{thm9}.
\end{proof}

\indent As in the case of Theorem \ref{thm9}, the following corollary can be deduced from Theorem \ref{thm11}.\par

\begin{cor} \label{cor12}Let $\mathcal{R}$ be a prime ring with involution.
Consider  $a, b\in\mathcal{R}^{\dag}$ such that $(e-a^\dagger a)b\in \mathcal{R}^{\dag}$  and let  $c\in \mathcal{R}$ such that $c$ commutes with  $b$ and  $b^*$.
Then, following statements are equivalent:

\item $(i)$ $c\cdot b\{1,4\}\cdot a\{1,4\}\subseteq (ab)\{1,4\}$;

\item $(ii)$ $cb^\dag a^\dag \in ab\{1,4\}$, $b^\dag a^\dag \in ab\{1\}$ .
\end{cor}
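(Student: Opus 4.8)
The plan is to mimic the passage from Theorem \ref{thm9} to Corollary \ref{cor10}, only now working on the adjoint side. By the duality observation already used in the proof of Theorem \ref{thm11}, namely $(x\{1,4\})^* = x^*\{1,3\}$ for every $x \in \mathcal{R}$, the inclusion $c\cdot b\{1,4\}\cdot a\{1,4\}\subseteq (ab)\{1,4\}$ is equivalent to $a^*\{1,3\}\cdot b^*\{1,3\}\cdot c^* \subseteq (b^*a^*)\{1,3\}$. So the first step is to rewrite the hypothesis $(e-a^{\dag}a)b\in\mathcal{R}^{\dag}$ in terms of the adjoint data: since $a^{\dag}a$ is a Hermitian idempotent, $e-a^{\dag}a = (e-a^{\dag}a)^*$, and $\big((e-a^{\dag}a)b\big)^* = b^*(e-a^{\dag}a) = b^*\big(e-(a^*)^{\dag}a^*\big)$ because $(a^{\dag})^* = (a^*)^{\dag}$, hence $a^{\dag}a = a^*(a^{\dag})^* = a^*(a^*)^{\dag}\,$ is the range idempotent for $a^*$ written the other way round — more precisely $a^{\dag}a = (a^*)^{\dag}a^*$ is \emph{not} right; instead one uses $(a^{\dag}a)^* = a^{\dag}a$ and the identity $a^{\dag}a = a^{\dag}(a^{\dag})^{\dag} = $ the Hermitian idempotent $(a^*)(a^*)^{\dag}$ is what appears when $b^*a^*$ is factored. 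The clean way: Theorem 5.4 of Koliha–Patr\'{\i}cio gives $(e-a^{\dag}a)b \in \mathcal{R}^{\dag}$ iff $b^*(e-a^{\dag}a) \in \mathcal{R}^{\dag}$, and $e - a^{\dag}a$ is exactly the idempotent complementary to the left support of $a^*$; in the notation of Corollary \ref{cor10} applied to the pair $(a^*, b^*)$ this is precisely the element ``$e - (\text{right support of } a^*)$'' that multiplies $b^*$ on the right, so the hypothesis of Corollary \ref{cor10} is met for $(a^*,b^*)$.

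The second step is then to invoke Corollary \ref{cor10} directly with $a^*$, $b^*$, $c^*$ in place of $a$, $b$, $c$. Note the commutation hypothesis transfers correctly: $c$ commutes with $b$ and $b^*$ iff $c^*$ commutes with $b^*$ and $b$, which is exactly what Corollary \ref{cor10} needs (it requires its ``$c$'' to commute with its ``$a$'' and ``$a^*$'', and here that ``$a$'' is $b^*$). Corollary \ref{cor10} then yields the equivalence of $a^*\{1,3\}\cdot b^*\{1,3\}\cdot c^* \subseteq (b^*a^*)\{1,3\}$ with the two conditions $(b^*)^{\dag}(a^*)^{\dag}c^* \in (b^*a^*)\{1,3\}$ and $(b^*)^{\dag}(a^*)^{\dag} \in (b^*a^*)\{1\}$.

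The third and final step is to translate these two conditions back. Using $(x^*)^{\dag} = (x^{\dag})^*$ and $(x\{1\})^* = x^*\{1\}$, $(x\{1,3\})^* = x^*\{1,4\}$, we get that $(b^*)^{\dag}(a^*)^{\dag}c^* = (cb^{\dag}a^{\dag})^*$ lies in $(b^*a^*)\{1,3\} = \big((ab)\{1,4\}\big)^*$ iff $cb^{\dag}a^{\dag} \in (ab)\{1,4\}$, and similarly $(b^*)^{\dag}(a^*)^{\dag} = (b^{\dag}a^{\dag})^* \in (b^*a^*)\{1\}$ iff $b^{\dag}a^{\dag} \in (ab)\{1\}$. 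This gives exactly statement $(ii)$ of the corollary. I expect the only delicate point to be bookkeeping in the first step — correctly identifying $e - a^{\dag}a$ with the complementary support idempotent of $a^*$ so that the hypothesis of Corollary \ref{cor10} is literally in force for the dual pair — but this is the same identification already implicit in the proof of Theorem \ref{thm11}, so it should go through without incident.
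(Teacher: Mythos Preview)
Your plan is correct and matches the paper's approach: the paper states only that Corollary~\ref{cor12} ``can be deduced from Theorem~\ref{thm11}'' in the same way Corollary~\ref{cor10} was deduced from Theorem~\ref{thm9}, and your route---applying the $*$-duality $(x\{1,4\})^* = x^*\{1,3\}$ and invoking Corollary~\ref{cor10} for the pair $(b^*,a^*,c^*)$---is exactly this, with the key identification $a^*(a^*)^{\dag} = (a^{\dag}a)^* = a^{\dag}a$ making the hypothesis $b^*(e-a^*(a^*)^\dag)\in\mathcal{R}^\dag$ equivalent to $(e-a^{\dag}a)b\in\mathcal{R}^\dag$. The only comment is that your first step is written rather hesitantly (the aside ``$a^{\dag}a = (a^*)^{\dag}a^*$ is \emph{not} right'' is a false alarm---in fact $a^{\dag}a = a^*(a^*)^{\dag}$ is exactly what you need and is correct); you should simply state the clean identity and proceed.
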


\markright{\hskip4truecm WEIGHTED REVERSE ORDER LAWS }
\begin{thm}\label{thm13}Let $\mathcal{R}$ be a prime ring with involution.
Consider  $a, b\in\mathcal{R}^{\dag}$ and $c\in \mathcal{R}$ such that $c$ commutes with  $a$ and  $a^*$.
Then, following statements are equivalent:

\item $(i)$ $b\{1,3\}\cdot a\{1,3\}\subseteq (cab)\{1,3\}$;

\item $(ii)$ $b^\dag a^\dag \in (cab)\{1,3\}$, $cab=cabb^\dag a^\dag ab$ and $ca(e-bb^\dag) a^\dag a(e-bb^\dag)=ca(e-bb^\dag)$.
\end{thm}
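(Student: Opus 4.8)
The plan is to imitate the proof of Theorem~\ref{thm9}, working throughout with the block decompositions of Remark~\ref{rema8}; recall there $p=bb^{\dag}$, $q=b^{\dag}b$, $r=aa^{\dag}$, $d=aa^{*}$, $a^{\dag}=a^{*}d^{\dag}$, and $c=\bmatrix{cc}c_{1}&0\\0&c_{2}\endbmatrix_{r,r}$ with $rc=cr=c_{1}$, since $c$ commutes with $r$ by Proposition~\ref{prop2}. From the block forms one reads off $cab=c_{1}a_{1}b$ and $cabb^{\dag}=c_{1}a_{1}$, and one has the unconditional identity $c_{1}a_{1}a_{1}^{*}d^{\dag}a_{2}+c_{1}a_{2}a_{2}^{*}d^{\dag}a_{2}=c_{1}a_{2}$ (because $a_{1}a_{1}^{*}d^{\dag}+a_{2}a_{2}^{*}d^{\dag}=(aa^{*})(aa^{*})^{\dag}=aa^{\dag}=r$ and $ra_{2}=a_{2}$). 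I would first treat the case $cab=0$: then $(cab)\{1,3\}=\mathcal{R}$, so~(i) holds trivially, while $cab=0$ forces $c_{1}a_{1}=cabb^{\dag}=0$, so that $cabb^{\dag}a^{\dag}=c_{1}a_{1}a_{1}^{*}d^{\dag}=0$ and $cabb^{\dag}a^{\dag}ab=0$, and the unconditional identity gives $ca(e-bb^{\dag})a^{\dag}a(e-bb^{\dag})=c_{1}a_{2}a_{2}^{*}d^{\dag}a_{2}=c_{1}a_{2}=ca(e-bb^{\dag})$; hence all three clauses of~(ii) hold. So from now on I assume $cab\neq0$, i.e.\ $c_{1}a_{1}b\neq0$.

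Next I would translate~(i). Using the generic forms of $a^{(1,3)}$ and $b^{(1,3)}$ from Remark~\ref{rema8} (with $z_{1},z_{2}$ given by~$(3.1)$), a direct multiplication yields
$$
(cab)\,b^{(1,3)}a^{(1,3)}=\bmatrix{cc}c_{1}a_{1}z_{1}&c_{1}a_{1}z_{2}\\0&0\endbmatrix_{r,r},
$$
(in particular the parameters $u,v$ of $b^{(1,3)}$ drop out), so $b^{(1,3)}a^{(1,3)}\in(cab)\{1,3\}$ for all admissible parameters is equivalent to the three families of relations $c_{1}a_{1}z_{1}c_{1}a_{1}b=c_{1}a_{1}b$, $c_{1}a_{1}z_{2}=0$, $(c_{1}a_{1}z_{1})^{*}=c_{1}a_{1}z_{1}$. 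Substituting~$(3.1)$ into the first family, first with $x_{1}=x_{3}=0$ and then letting $x_{1},x_{3}$ range freely, primeness of $\mathcal{R}$ and $c_{1}a_{1}b\neq0$ deliver --- exactly as $(3.5)$ was produced in Theorem~\ref{thm9} --- the three identities
$$
c_{1}a_{1}a_{1}^{*}d^{\dag}c_{1}a_{1}b=c_{1}a_{1}b,\qquad c_{1}(a_{1}-a_{1}a_{1}^{*}d^{\dag}a_{1})=0,\qquad c_{1}a_{1}a_{1}^{*}d^{\dag}a_{2}=0.
$$

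It remains to match these with the clauses of~(ii) and to clear the other two families. Since $cabb^{\dag}a^{\dag}=c_{1}a_{1}a_{1}^{*}d^{\dag}$, the first identity is $b^{\dag}a^{\dag}\in(cab)\{1\}$; the second is $cab=cabb^{\dag}a^{\dag}ab$; and, by the unconditional identity together with $ca(e-bb^{\dag})=c_{1}a_{2}$, the third is $ca(e-bb^{\dag})a^{\dag}a(e-bb^{\dag})=ca(e-bb^{\dag})$. Furthermore, once these three identities hold, $(3.1)$ collapses to $c_{1}a_{1}z_{1}=c_{1}a_{1}a_{1}^{*}d^{\dag}$ and $c_{1}a_{1}z_{2}=0$ for every admissible $z_{1},z_{2}$, so the family $c_{1}a_{1}z_{2}=0$ is automatic and the family $(c_{1}a_{1}z_{1})^{*}=c_{1}a_{1}z_{1}$ reduces to hermiticity of $cabb^{\dag}a^{\dag}$, i.e.\ to $b^{\dag}a^{\dag}\in(cab)\{3\}$. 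Reading the chain forward and backward, (i) is equivalent to the conjunction of $b^{\dag}a^{\dag}\in(cab)\{1,3\}$, $cab=cabb^{\dag}a^{\dag}ab$ and $ca(e-bb^{\dag})a^{\dag}a(e-bb^{\dag})=ca(e-bb^{\dag})$, which is~(ii).

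The main obstacle, exactly as in Theorem~\ref{thm9}, is the bookkeeping with the corner idempotents $p,q,r$: one must track carefully in which corner each product lives and apply $a_{1}p=a_{1}$, $ra_{2}=a_{2}$, $bb^{\dag}=p$, $rc=cr=c_{1}$ at the right places, and, when invoking primeness, check that the relation at hand really has the form $\alpha\,\mathcal{R}\,\beta=\{0\}$. Concretely, $c_{1}(a_{1}-a_{1}a_{1}^{*}d^{\dag}a_{1})\in r\mathcal{R}p$ and $c_{1}a_{1}b\in r\mathcal{R}q$, so $c_{1}(a_{1}-a_{1}a_{1}^{*}d^{\dag}a_{1})\,\mathcal{R}\,c_{1}a_{1}b=c_{1}(a_{1}-a_{1}a_{1}^{*}d^{\dag}a_{1})\,(p\mathcal{R}r)\,c_{1}a_{1}b$ vanishes by the $x_{1}$-part of the first family, whence $c_{1}(a_{1}-a_{1}a_{1}^{*}d^{\dag}a_{1})=0$; the treatment of $c_{1}a_{1}a_{1}^{*}d^{\dag}a_{2}$ is analogous.
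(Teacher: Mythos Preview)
Your proof is correct and follows essentially the same route as the paper's: reduce via the block decompositions of Remark~\ref{rema8} and then imitate the primeness argument of Theorem~\ref{thm9} to arrive at the four scalar identities $c_{1}a_{1}a_{1}^{*}d^{\dag}c_{1}a_{1}=c_{1}a_{1}$, $(c_{1}a_{1}a_{1}^{*}d^{\dag})^{*}=c_{1}a_{1}a_{1}^{*}d^{\dag}$, $c_{1}a_{1}=c_{1}a_{1}a_{1}^{*}d^{\dag}a_{1}$ and $c_{1}a_{1}a_{1}^{*}d^{\dag}a_{2}=0$, which are then matched with the clauses of~(ii). Your version is in fact more detailed than the paper's (which only sketches the argument), in particular your explicit treatment of the degenerate case $cab=0$ and your use of the identity $a_{1}a_{1}^{*}d^{\dag}+a_{2}a_{2}^{*}d^{\dag}=r$ to translate the fourth condition into $ca(e-bb^{\dag})a^{\dag}a(e-bb^{\dag})=ca(e-bb^{\dag})$ are welcome clarifications.
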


\begin{proof} Using arguments similar to the ones in the proof of Theorem \ref{thm9}, it is not difficult to prove
that the first statement of the theorem is equivalent to the following equations.
\begin{align*}
&\hbox{\rm (i)}\hskip.05truecm c_1a_1a_1^*d^\dag c_1a_1=c_1a_1,& &\hbox{\rm (ii)}\hskip.05truecm c_1a_1a_1^*d^\dag =(c_1a_1a_1^*d^\dag)^*,&\\
&\hbox{\rm (iii)}\hskip.05truecm c_1a_1-c_1a_1a_1^*d^\dag a_1=0,& & \hbox{\rm (iv)}\hskip.05truecm c_1a_1a_1^*d^\dag a_2=0.&\\
\end{align*}
\indent The first two equations are equivalent to $b^\dag a^\dag \in (cab)\{1,3\}$,
the third to  $cabb^\dag a^\dag ab= cab$ and the fourth to $ca(e-bb^\dag) a^\dag a(e-bb^\dag)=ca(e-bb^\dag)$.
\end{proof}

\markright{\hskip5truecm BOASSO ET AL. }
\begin{thm}\label{thm14}Let $\mathcal{R}$ be a prime ring with involution.
Consider  $a, b\in\mathcal{R}^{\dag}$ and $c\in \mathcal{R}$ such that $c$ commutes with $b$ and $b^*$.
Then, following statements are equivalent:

\item $(i)$ $b\{1,4\}\cdot a\{1,4\}\subseteq (abc)\{1,4\}$;

\item $(ii)$ $b^\dag a^\dag \in (abc)\{1,4\}$,  $ab= abb^\dag a^\dag abc$ and $ (e-a^\dag a)bc=  (e-a^\dag a)bb^\dag  (e-a^\dag a)bc$.
\end{thm}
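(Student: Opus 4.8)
The plan is to deduce the theorem from Theorem \ref{thm13} by passing to adjoints, in exactly the way Theorem \ref{thm11} was deduced from Theorem \ref{thm9}. The only preliminary fact needed is that, for every $x\in\mathcal{R}$, one has $(x\{1,4\})^{*}=x^{*}\{1,3\}$ and, symmetrically, $(x\{1,3\})^{*}=x^{*}\{1,4\}$; each of these is immediate from the defining equations of $\{1,3\}$- and $\{1,4\}$-inverses together with the fact that $^{*}$ is an anti-isomorphism of degree $2$ (the first identity already appears in the proof of Theorem \ref{thm11}).

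First I would recast statement (i). Since $y\mapsto y^{*}$ is a bijection of $b\{1,4\}$ onto $b^{*}\{1,3\}$ and of $a\{1,4\}$ onto $a^{*}\{1,3\}$, and since $(abc)^{*}=c^{*}b^{*}a^{*}$, applying $^{*}$ to both sides of the inclusion in (i) shows that (i) is equivalent to
$$
a^{*}\{1,3\}\cdot b^{*}\{1,3\}\subseteq (c^{*}b^{*}a^{*})\{1,3\}.
$$
I would then apply Theorem \ref{thm13} with $a^{*}$, $b^{*}$ and $c^{*}$ in the roles of $b$, $a$ and $c$ respectively. Its hypotheses hold: $a^{*},b^{*}\in\mathcal{R}^{\dag}$, and since $c$ commutes with $b$ and $b^{*}$ the element $c^{*}$ commutes with $b^{*}$ and with $b=(b^{*})^{*}$, which is exactly what Theorem \ref{thm13} requires of its weight. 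Hence the displayed inclusion is equivalent to the three conditions of Theorem \ref{thm13}(ii) read for $a^{*}$, $b^{*}$ and $c^{*}$.

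It then remains to carry those three conditions back to $a$, $b$, $c$ by applying $^{*}$ once more and simplifying. This uses $(a^{*})^{\dag}=a^{\dag*}$ and $(b^{*})^{\dag}=b^{\dag*}$, the relation $\bigl((c^{*}b^{*}a^{*})\{1,3\}\bigr)^{*}=(abc)\{1,4\}$, the reversal of order under $^{*}$, and the fact that $a^{*}a^{\dag*}=a^{\dag}a$ and $b^{\dag*}b^{*}=bb^{\dag}$ are Hermitian idempotents. For instance, the condition that $(a^{*})^{\dag}(b^{*})^{\dag}$ be a $\{1,3\}$-inverse of $c^{*}b^{*}a^{*}$ becomes, upon taking adjoints, $b^{\dag}a^{\dag}\in(abc)\{1,4\}$; the remaining two conditions become, after the analogous routine computations, the two weighted identities displayed in (ii).

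The step I expect to be the main obstacle is exactly this last transport. It is computational rather than conceptual, but it is delicate: under $^{*}$ every product of three factors is reversed, and one must correctly pair the Hermitian idempotents produced by $a^{*}$ (namely $a^{*}a^{\dag*}$ and $a^{\dag*}a^{*}$) with $a^{\dag}a$ and $aa^{\dag}$, and similarly those produced by $b^{*}$, so that the element playing the role of $e-bb^{\dag}$ in Theorem \ref{thm13} is tracked to $e-a^{\dag}a$ here and the weighted identity of Theorem \ref{thm13}(ii) acquires the right shape in the present variables.
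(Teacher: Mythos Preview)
Your approach is exactly the paper's: it too reduces Theorem \ref{thm14} to Theorem \ref{thm13} via the identity $(x\{1,4\})^{*}=x^{*}\{1,3\}$ and the substitution $(a,b,c)\mapsto(b^{*},a^{*},c^{*})$, leaving the back-transport of the three conditions as the only work. One small caution for the transport step you flag: dualizing the second identity of Theorem \ref{thm13}(ii) actually gives $abc=abb^{\dag}a^{\dag}abc$, so the printed ``$ab=\ldots$'' in (ii) appears to be a misprint for ``$abc=\ldots$''.
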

\begin{proof} As in Theorem \ref{thm11}, since given $x\in  \mathcal{R}$,
$(x\{1,4\})^*=x^*\{1,3\}$, the first statement is equivalent to $a^*\{1,3\}\cdot b^*\{1,3\}\subseteq (c^*b^*a^*)\{1,3\}$. Now apply
Theorem \ref{thm13}.
\end{proof}

\indent Next some characterizations of  reverse order laws for $K$-inverses in $C^*$-algebras will be extended
to the context of the present work.\par

\begin{thm}\label{thm15} Let $\mathcal{R}$ be a ring with involution.
Consider $a,b \in\mathcal{R}^{\dag}$ such that  $ab, abb^{\dag}, a(e-bb^\dag)\in \mathcal{R}^{\dag}$.
Let $c\in \mathcal{R}$ such that $c$ commutes with  $a$ and  $a^*$, $cab=ab$ and $c^*ab=ab$.
Then, the following statements are equivalent:

\item $(i)$  $bb^\dag a^*ab=a^*ab$;

\item $(ii)$ $b\{1,3\}\cdot a\{1,3\}\cdot c\subseteq (ab)\{1,3\}$;

\item $(iii)$ $b^\dag a^\dag c\in (ab)\{1,3\}$;

\item $(iv)$ $b^\dag a^\dag  c\in (ab)\{1,2,3\}$.

\end{thm}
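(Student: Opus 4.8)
The plan is to prove the cycle of implications $(i)\Rightarrow(ii)\Rightarrow(iii)\Rightarrow(iv)\Rightarrow(i)$, with the auxiliary equivalences linking back to $(i)$ provided by the decomposition machinery of Remark 3.1 and Theorem 3.3. First I would record the matrix pictures of $a,b,c$ relative to $p=bb^\dag$, $q=b^\dag b$, $r=aa^\dag$ exactly as in Remark 3.1, writing $a=\bmatrix{cc}a_1&a_2\\0&0\endbmatrix_{r,p}$, and noting that the hypotheses $cab=ab$ and $c^*ab=ab$ translate into $c_1a_1=a_1$ and $a_1^*c_1^*=a_1^*$, i.e. $c_1$ acts as a left identity on the range of $a_1$ (and its adjoint). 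The hypothesis $abb^\dag,a(e-bb^\dag)\in\mathcal{R}^\dag$ says $a_1,a_2\in\mathcal{R}^\dag$ in the relevant corners, which is exactly what is needed to make the implication $(3.5)(ii)\Rightarrow(3.5)(iii)$ of Corollary 3.2 available.

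The implication $(i)\Rightarrow(ii)$ is where I would do the real work. Condition $(i)$, $bb^\dag a^*ab=a^*ab$, is $p\cdot a^*a\cdot p = a^*a\cdot p$ restricted appropriately; unpacking it through $a^\dag=a^*(aa^*)^\dag$ and $d=aa^*=a_1a_1^*+a_2a_2^*$ should yield precisely $a_1-a_1a_1^*d^\dag a_1=0$ together with $a_1a_1^*d^\dag a_2=0$, i.e. conditions $(3.5)(ii)$ and $(3.5)(iii)$. The point is that $(i)$ is a statement purely about $a$ and $b$ (no $c$), so it should be recast as ``$b^\dag a^\dag\in ab\{1\}$ and $a^\dag\in a(e-bb^\dag)\{1\}$''; in fact, using the $a_1,a_2\in\mathcal{R}^\dag$ hypothesis and the computation displayed after Theorem 3.3, one of these follows from the other, so $(i)$ is equivalent to the pair. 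Then to get $(3.5)(i)$, namely $a_1a_1^*d^\dag c_1a_1=a_1$, I would use $(3.5)(ii)$ to rewrite $a_1a_1^*d^\dag a_1=a_1$ and then the hypothesis $c_1a_1=a_1$ to absorb the $c_1$: $a_1a_1^*d^\dag c_1a_1=a_1a_1^*d^\dag a_1=a_1$. Thus all three equations $(3.5)$ hold, and the hermitian requirement $(3.3)(iii)$, that $a_1a_1^*d^\dag c_1 = abb^\dag a^\dag c$ (suitably placed) be hermitian, follows because $a_1a_1^*d^\dag c_1 = a_1 a_1^* d^\dag c_1 c_1^{-?}$... more safely: $a_1a_1^*d^\dag c_1a_1 c_1\cdots$ — here I would instead argue that $abb^\dag a^\dag c$ equals $abb^\dag a^\dag$ plus terms killed by the absorption identities, and $abb^\dag a^\dag = (ab)(ab)^\dag$-type hermitian element under $(3.5)$. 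By Theorem 3.3 (and the equivalence of $(3.2)$ with $(3.3)$ established there), $(3.5)$ plus hermiticity gives exactly $(ii)$.

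For $(ii)\Rightarrow(iii)$ I take the particular members $b^\dag\in b\{1,3\}$ and $a^\dag\in a\{1,3\}$ in the inclusion $(ii)$, which immediately yields $b^\dag a^\dag c\in(ab)\{1,3\}$. For $(iii)\Rightarrow(iv)$ the only thing missing is Penrose equation $(2)$ for $x:=b^\dag a^\dag c$, i.e. $xabx=x$; since from $(iii)$ we already know $abxab=ab$ and $(abx)^*=abx$, the missing identity $xabx=x$ should follow by the same corner computation once we observe that $c$ commutes with $a,a^*$ (hence with $aa^\dag$ by Proposition 2.2) and $c_1a_1=a_1$, making $x$ land in the ``right'' corner where the reflexivity is automatic. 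Finally $(iv)\Rightarrow(iii)$ is trivial (drop equation $(2)$), and $(iii)\Rightarrow(i)$ is obtained by specializing Theorem 3.3: $(iii)$ says $b^\dag a^\dag c\in ab\{1,3\}$, and combined with the standing hypotheses $cab=ab$, $c^*ab=ab$ this forces $(3.5)(ii)$–$(3.5)(iii)$, which is $(i)$ rewritten.

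The main obstacle I expect is the bookkeeping around the weight $c$ in the hermiticity condition $(3.3)(iii)$ and in verifying Penrose $(2)$ for step $(iii)\Rightarrow(iv)$: one must use the full strength of both $cab=ab$ and $c^*ab=ab$ (not just one of them), together with $c$ commuting with $aa^\dag$, to push $c$ and $c^*$ past $a^\dag$ and cancel them, and it is easy to lose track of which corner each factor lives in. A secondary subtlety is making sure the extra Moore-Penrose invertibility hypotheses $abb^\dag,a(e-bb^\dag)\in\mathcal{R}^\dag$ are genuinely used (they are what collapses $(3.5)(ii)$ and $(3.5)(iii)$ into a single condition via the computation after Theorem 3.3, so that the clean statement $(i)$ suffices).
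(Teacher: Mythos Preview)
Your plan has the right skeleton but misses the two ideas that actually make the proof go through.

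First, the paper does not carry the weight $c$ through the argument. Under the standing hypotheses $cab=ab$, $c^*ab=ab$ and $c$ commuting with $a,a^*$ (hence with $a^\dag$ and $d^\dag$ by Proposition~2.2), one checks in the matrix picture that $b^\dag a^\dag c=b^\dag a^\dag$ and that $b\{1,3\}\cdot a\{1,3\}\cdot c\subseteq(ab)\{1,3\}$ is equivalent to the unweighted inclusion $b\{1,3\}\cdot a\{1,3\}\subseteq(ab)\{1,3\}$. After this reduction the whole theorem is the case $c=e$, and your struggle with the hermiticity of $a_1a_1^*d^\dag c_1$ (``$c_1^{-?}$\ldots more safely\ldots'') disappears. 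Keeping $c$ around is what forces you into those contortions.

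Second, and more seriously, your step $(i)\Rightarrow(ii)$ is missing its engine. Condition $(i)$ unpacks to $a_2^*a_1=0$, not directly to the pair $(3.5)$(ii)--(iii). The paper's key device is to introduce the idempotent $s=a_1a_1^\dag$ and observe that $a_1^*a_2=0$ forces $a_1a_1^*\in s\mathcal{R}s$ and $a_2a_2^*\in(e-s)\mathcal{R}(e-s)$, so $d=a_1a_1^*+a_2a_2^*$ and hence $d^\dag$ are block-diagonal with respect to $s$. This immediately yields $a_1^*d^\dag a_2=0$ and $a_1a_1^*d^\dag=a_1a_1^*(a_1a_1^*)^\dag=s$, which is hermitian; the verification that every $b^{(1,3)}a^{(1,3)}$ lies in $(ab)\{1,3\}$ is then a short computation. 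Your sentence ``unpacking it \ldots should yield precisely $(3.5)$(ii) and $(3.5)$(iii)'' skips exactly this decomposition, and your later claim that ``$(i)$ is equivalent to the pair'' is in fact unjustified in the backward direction: $(3.5)$(ii)--(iii) alone give only $a_2^*d^\dag a_1=0$, and one needs the hermiticity of $a_1a_1^*d^\dag$ (coming from the $\{3\}$ condition in statement (iii)) to recover $a_2^*a_1=0$.

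Finally, the paper does not attempt $(iii)\Rightarrow(iv)$ directly; it proves $(i)\Rightarrow(iv)$ (and $(iv)\Rightarrow(iii)$ is trivial). Once the $s$-decomposition gives $d^\dag a_1a_1^*=s$, the outer-inverse identity $b^\dag a^\dag abb^\dag a^\dag=b^\dag a^\dag$ is a one-line check. Your hand-wave that reflexivity is ``automatic in the right corner'' is not a proof.
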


\begin{proof}Under the conditions of the theorem, using the matrix representations given in Remark \ref{rema8},
 it is not difficult to prove that $b^\dag a^\dag c=b^\dag a^\dag$ and that
necessary and sufficient condition for  (ii) to holds is the fact that
$b\{1,3\}\cdot a\{1,3\}\subseteq (ab)\{1,3\}$. In particular, it is enough to prove the
equivalences among statements (i)-(iv) for the case $c=e$.
Now, the proof of this case follows by  Theorem 3.1 in Cvetkovi\' c-Ili\' c, Harte (2011), where the same conditions
of statements (i)-(iv) were considered for $a$, $b$ two $C^*$-algebra elements
and $c=e$. However, for the sake of completeness the proof of the case $c=e$ will be presented.\par 

We will show that $(i)\Rightarrow(ii)\Rightarrow(iii)\Rightarrow(i)$ and then
$(i)\Rightarrow(iv)\Rightarrow(iii)$. Note that the notation of Remark \ref{rema8} will be used.
In particular, under the hypothesis of the theorem, $a_1$, $a_2\in \mathcal{R}^{\dag}$.\par

$(i)\Rightarrow(ii)$. Suppose that $bb^\dag a^*ab=a^*ab$, which is equivalent to $a_2^*a_1=0$, i.e., $a_1^*a_2=0$. For arbitrary $a^{(1,3)}, b^{(1,3)}$ we have that
$$
abb^{(1,3)}a^{(1,3)}ab=\bmatrix{cc}a_1z_1a_1b&0\\0&0\endbmatrix_{r,q}.
$$
Let $s=a_1a_1^\dag$. Since $a_1^*a_2=0$, $d\in s\A s+(e-s)\A (e-s)$, and then we have that $d^\dag \in s\A s+(e-s)\A (e-s)$. Now,
$a_1^*d^\dag a_2\in \A s\cdot( s\A s+(e-s)\A (e-s))\cdot (e-s)\A=\{0\}$. Hence, $a_1^*d^\dag a_2=0$, i.e., $a_2^*d^\dag a_1=0$.

Since,

\begin{eqnarray*}
&&a_1z_1a_1=a_1a_1^*d^\dag a_1+a_1(e-a_1^*d^\dag a_1)x_1a_1\\
&&\phantom{a_1z_1a_1}=(d-a_2a_2^*)d^\dag a_1+(a_1-(d-a_2a_2^*)d^\dag a_1)x_1a_1\\
&&\phantom{a_1z_1a_1}=a_1,
\end{eqnarray*}

it follows that $abb^{(1,3)}a^{(1,3)}ab=ab$. To prove that $abb^{(1,3)}a^{(1,3)}$ is hermitian, it is sufficient to prove that $a_1z_1$ is hermitian and $a_1z_2=0$. 
By computation, we get that $a_1z_1=a_1a_1^*d^\dag=a_1a_1^*(a_1a_1^*)^\dag$ which is hermitian. Also,

\begin{eqnarray*}
&& a_1z_2=(a_1-a_1a_1^*d^\dag a_1)x_2-a_1a_1^*d^\dag a_2x_4\\
&&\phantom{a_1z_2}=(a_1-(d-a_2a_2^*)d^\dag a_1)x_2=\\
&&\phantom{a_1z_2}=a_2a_2^*d^\dag a_1x_2\\
&&\phantom{a_1z_2}=0.
\end{eqnarray*}

$(ii)\Rightarrow(iii)$. This is evident.

$(iii)\Rightarrow(i)$. From $abb^\dag a^\dag ab=ab$ it follows that $a_1a_1^*d^\dag a_1=a_1$, i.e., $a_2a_2^*d^\dag a_1=0$. Similarly, from
$(abb^\dag a^\dag)^*=abb^\dag a^\dag$, we get that $a_1a_1^*d^\dag $ is hermitian. Now, $d^\dag a_1a_1^*a_1=a_1$, i.e., $a_2a_2^*a_1=0$. Multiplying the last equality by $a_2^\dag$ from the left, we get $a_2^*a_1=0$, which is equivalent to statement $(i)$.

$(iv)\Rightarrow(iii)$. This is obvious.

$(i)\Rightarrow(iv)$. We need to prove that $b^\dag a^\dag abb^\dag a^\dag=b^\dag a^\dag$, which is equivalent to $b^\dag a_1^*d^\dag a_1a_1^*d^\dag =b^\dag a_1^*d^\dag $. The last equality follows from the fact that $d^\dag a_1a_1^*=s$. 
\end{proof}

%\markright{\hskip5truecm BOASSO ET AL. }
\begin{thm}\label{thm16} Let $\mathcal{R}$ be a ring with involution.
Consider $a,b \in\mathcal{R}^{\dag}$ such that  $ab,  a^\dag ab, (e-a^\dag a)b\in \mathcal{R}^{\dag}$.
Let $c\in \mathcal{R}$ such that $c$ commutes with $b$ and $b^*$,  $abc=ab$ and  $abc^*=ab$.
Then, the following statements are equivalent:

\item $(i)$   $abb^*a^\dag a= abb^*$;

\item $(ii)$  $c\cdot b\{1,4\}\cdot a\{1,4\}\subseteq (ab)\{1,4\}$;

\item $(iii)$ $cb^\dag a^\dag\in (ab)\{1,4\}$;

\item $(iv)$  $cb^\dag a^\dag\in (ab)\{1,2,4\}$.

\end{thm}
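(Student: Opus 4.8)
The plan is to deduce Theorem~\ref{thm16} from Theorem~\ref{thm15} by passing to adjoints, exactly as Theorem~\ref{thm14} was deduced from Theorem~\ref{thm13}. The whole argument rests on the fact that the involution interchanges the Penrose equations $(3)$ and $(4)$ while fixing $(1)$ and $(2)$, so that $(x\{1,4\})^*=x^*\{1,3\}$ and $(x\{1,2,4\})^*=x^*\{1,2,3\}$ for every $x\in\mathcal{R}$, together with $h\in\mathcal{R}^{\dag}\Leftrightarrow h^*\in\mathcal{R}^{\dag}$, $(h^*)^{\dag}=h^{\dag*}$, and $a^*(a^*)^{\dag}=a^{\dag}a$ (because $a^{\dag}a$ is hermitian). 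Taking adjoints of the relations and set inclusions appearing in (i)--(iv), statement (i) becomes $a^{\dag}a\,bb^*a^*=bb^*a^*$, statement (ii) becomes
$$
a^*\{1,3\}\cdot b^*\{1,3\}\cdot c^*\subseteq (b^*a^*)\{1,3\},
$$
statement (iii) becomes $a^{\dag*}b^{\dag*}c^*\in(b^*a^*)\{1,3\}$, and statement (iv) becomes $a^{\dag*}b^{\dag*}c^*\in(b^*a^*)\{1,2,3\}$.

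First I would apply Theorem~\ref{thm15} with $b^*$, $a^*$, $c^*$ in place of $a$, $b$, $c$, so that $b^*a^*$ plays the role of $ab$. The only place a computation is needed is to see that the hypotheses transfer: $b^*,a^*\in\mathcal{R}^{\dag}$ is immediate, $b^*a^*=(ab)^*\in\mathcal{R}^{\dag}$ since $ab\in\mathcal{R}^{\dag}$, $b^*a^*(a^*)^{\dag}=b^*a^{\dag}a=(a^{\dag}ab)^*\in\mathcal{R}^{\dag}$ since $a^{\dag}ab\in\mathcal{R}^{\dag}$, and $b^*(e-a^*(a^*)^{\dag})=b^*(e-a^{\dag}a)=((e-a^{\dag}a)b)^*\in\mathcal{R}^{\dag}$ since $(e-a^{\dag}a)b\in\mathcal{R}^{\dag}$. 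For the weight, $c^*$ commutes with $b^*$ and with $(b^*)^*=b$ precisely because $c$ commutes with $b$ and $b^*$, while $c^*b^*a^*=b^*a^*$ and $cb^*a^*=b^*a^*$ are just $abc=ab$ and $abc^*=ab$ rewritten through the involution. Hence Theorem~\ref{thm15} is applicable.

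Finally I would read off the conclusion. In these variables the four statements of Theorem~\ref{thm15} assert, respectively, $a^*(a^*)^{\dag}bb^*a^*=bb^*a^*$, then $a^*\{1,3\}\cdot b^*\{1,3\}\cdot c^*\subseteq(b^*a^*)\{1,3\}$, then $(a^*)^{\dag}(b^*)^{\dag}c^*\in(b^*a^*)\{1,3\}$, and finally $(a^*)^{\dag}(b^*)^{\dag}c^*\in(b^*a^*)\{1,2,3\}$. Since $a^*(a^*)^{\dag}=a^{\dag}a$ and $(a^*)^{\dag}(b^*)^{\dag}=a^{\dag*}b^{\dag*}$, these are exactly the adjoint forms of (i)--(iv) recorded above, so the equivalences of Theorem~\ref{thm15} give the equivalence of (i)--(iv) here. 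I expect the only real work to be the bookkeeping of the second paragraph: matching the three auxiliary Moore--Penrose invertibility hypotheses $ab,\ abb^{\dag},\ a(e-bb^{\dag})$ of Theorem~\ref{thm15} with the three hypotheses $ab,\ a^{\dag}ab,\ (e-a^{\dag}a)b$ of Theorem~\ref{thm16} under the substitution. There is no genuine structural obstacle.
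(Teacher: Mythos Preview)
Your proposal is correct and follows exactly the paper's own approach: the paper's proof simply says to apply Theorem~\ref{thm15} to $b^*$ and $a^*$ using $x^*\{1,3\}=(x\{1,4\})^*$ and $x^*\{1,2,3\}=(x\{1,2,4\})^*$, and your argument fills in precisely the bookkeeping this entails. The verification of the hypotheses and the matching of statements (i)--(iv) with their adjoint counterparts are all accurate.
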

\begin{proof} Apply Theorem \ref{thm15} to $b^*$ and  $a^*$ using that
 $x^*\{1,3\}=(x\{1,4\})^*$ and  $x^*\{1,2,3\}=(x\{1,2,4\})^*$, $x\in \mathcal{R}$.
 \end{proof}
\markright{\hskip4truecm WEIGHTED REVERSE ORDER LAWS }

\begin{rema}\label{rem18} \rm Since the results of this section apply to prime rings with involution,
some of the most relevant examples of these objects will be considered.\par
\indent It is not difficult to prove that a commutative prime ring is just an integral domain.
In addition, given $\mathcal{R}$ a ring with unit element, if  $\mathcal{R}^{n\times n}$ is
the ring of all matrices of order $n\in \mathbb N$ with elements in $\mathcal{R}$,
then according to Theorem 8 in McCoy (1949),
necessary and sufficient for  $\mathcal{R}^{n\times n}$ to be prime is that  $\mathcal{R}$ is prime.
Therefore, if $\mathcal{R}$ is an integral domain and $n\in \mathbb N$, $\mathcal{R}^{n\times n}$
with the transpose is a prime ring with involution.\par
\par 
\indent In the context of Banach algebras, as it has been mentioned in section 1, every algebra of operators which contains
the ideal of all finite rank operators is easily seen to be prime, in particular $L(H)$, the algebra
of all bounded and linear maps defined on the Hilbert space $H$, is a prime ring with involution.
Moreover, if $K(H)$ is the closed ideal of all compact operators defined on the Hilbert space $H$,
then according to Proposition 2.4 in Mathieu (1988), $\mathcal{ C}(H)=L(H)/K(H)$, the Calkin algebra 
of $H$, is a prime ring with involution. Naturally, $L(H)$ and $\mathcal{ C}(H)$ are prime $C^*$-algebras.
 Concerning $C^*$-algebras, in general prime
$C^*$-algebras are not commutative. In fact, if $A=C([0,1])$, then it is not difficult to define
two continuous functions $f$, $g\in C([0,1])$, such that $fg=0$.
On the other hand, prime $C^*$-algebras
were characterized in terms of the norm of elementary operators,
the spectrum of elementary operators, and the Taylor joint spectrum
of left and right multiplication operators, see Proposition 2.3 in Matieu (1989) and 
Theorem 2 and Corollary 3 in Curto, Hern\'andez (1997). 
\end{rema}
\markright{\hskip5truecm BOASSO ET AL. }
\noindent {\bf Acknowledgments} \par
\medskip
\indent The authors wish to express their indebtedness
to the referee, for his observations and suggestions
considerably improved the final version of the present article.\par
\medskip
\noindent {\bf References}\par
\medskip

\noindent   Baksalary J. K.,  Baksalary O. M. (2005).  An invariance property related to the reverse order
 law. {\it Linear Algebra Appl.} 410: 64--69.\par

\noindent  Boasso E.  (2006).  On the Moore-Penrose inverse in
$C^*$-algebras. {\it Extracta Math.} 21:
 93-106.\par

\noindent Curto R. E., Hern\'andez G. C. (1997). A joint spectral characterization
of primeness for $C^*$-algebras. {\it Proc. Amer. Math. Soc.} 125: 3299-3301.

\noindent Cvetkovi\' c-Ili\' c D. S.,  Harte R. E. (2011). Reverse order laws in C*-algebras.
{\it Linear Alg. Appl.} 434: 1388-1394. \par

\noindent  Greville T. N. E. (1966).  Note on the generalized inverse of a
matrix product. {\it SIAM Rev.}   8: 518-521. \par

\noindent  Harte R. E.,  Mbekhta M. (1992).  On generalized inverses in
$C^*$-algebras. {\it Studia Math.}  103:  71-77.\par

\noindent  Harte R., Hern\'andez C. (1998). On the Taylor spectrum of
left-right multipliers. {\it Proc. Amer. Math. Soc.} 126: 397-404.\par

\noindent  Koliha J. J., Patr\'{\i}cio P.  (2002).  Elements of rings with equal spectral idempotents.
{\it J. Aust.  Math. Soc.}  72: 137-152.\par

\noindent  Koliha J. J., Djordjevi\'c D. S., Cvetkovi\' c  D. S. (2007).   Moore-Penrose inverse in rings with
involution. {\it Linear Algebra Appl.}  426: 371-381.\par

\noindent Mathieu M. (1988). Elementary operators on prime $C^*$-algebras II.   
{\it Glasg. Math. J.} 30: 275-284.\par

\noindent Mathieu M. (1989). Elementary operators on prime $C^*$-algebras I.   
{\it Math. Ann.} 284: 223-244.\par

\noindent McCoy N. H. (1949). Prime ideals in general rings. {\it Amer. J. Math.} 71: 823-833.\par

\noindent Moore E. H. (1920). On the reciprocal of the general algebraic matrix.
{\it Bull. Amer. Math. Soc.} 26: 394-395. \par

\noindent Mosi\' c D.,  Djordjevi\' c D. S. (2009).  Moore-Penrose-invertible normal and hermitian
elements in rings. {\it Linear Algebra Appl.}  431: 732-745.\par

\noindent Penrose R.  (1955).  A generalized inverse for matrices.
{\it Proc. Cambridge Philos. Soc.}  51:  406-413.\par
\markright{\hskip4truecm WEIGHTED REVERSE ORDER LAWS }

\noindent  Roch S., Silbermann B. (1999).  Continuity of generalized inverses in Banach algebras. {\it Studia  Math.}
 136: 197-227.\par
\bigskip

\noindent \normalsize \rm Enrico Boasso\par
  \noindent  E-mail: enrico\_odisseo@yahoo.it \par
\medskip
\noindent 
\end{document}